\newtheorem{cor*}{Corollary}
\newtheorem{prop*}{Proposition}
\newtheorem{theorem}{Theorem}[section]
\newtheorem{cor}{Corollary}[theorem]
\newtheorem{prop}[theorem]{Proposition}
\newtheorem{lem}[theorem]{Lemma}
\newtheorem{algo}[theorem]{Algorithm}
\theoremstyle{definition}
\newtheorem{defn}[theorem]{Definition}
\newtheorem{exmp}[theorem]{Example}
\newtheorem{rem}[theorem]{Remark}
\newcommand{\F}{\mathcal{F}}
\newcommand{\Z}{\mathbb{Z}}
\newcommand{\Orb}{\mathcal{O}}
\DeclareMathOperator{\clmap}{CLMod}
\DeclareMathOperator{\aut}{Aut}
\DeclareMathOperator{\out}{Out}
\DeclareMathOperator{\lcm}{lcm}
\DeclareMathOperator{\pmap}{PMod}
\DeclareMathOperator{\lmap}{LMod}
\DeclareMathOperator{\smap}{SMod}
\DeclareMathOperator{\map}{Mod}
\DeclareMathOperator{\homeo}{Homeo^{+}}
\begin{document}

\title[Liftable mapping class groups of cyclic covers of spheres ]{Generating the liftable mapping class groups of \\cyclic covers of spheres}

\author{Pankaj Kapari}
\address{Department of Mathematics\\
Indian Institute of Science Education and Research Bhopal\\
Bhopal Bypass Road, Bhauri \\
Bhopal 462 066, Madhya Pradesh\\
India}
\email{pankajkapri02@gmail.com}

\author{Kashyap Rajeevsarathy}
\address{Department of Mathematics\\
Indian Institute of Science Education and Research Bhopal\\
Bhopal Bypass Road, Bhauri \\
Bhopal 462 066, Madhya Pradesh\\
India}
\email{kashyap@iiserb.ac.in}
\urladdr{https://home.iiserb.ac.in/$_{\widetilde{\phantom{n}}}$kashyap/}

\author{Apeksha Sanghi}
\address{Department of Mathematics\\
Indian Institute of Science Education and Research Mohali\\
Knowledge city, Sector 81, SAS Nagar, Manauli PO 140306\\
Punjab, India}
\email{apekshasanghi93@gmail.com}

\subjclass[2020]{Primary 57K20, Secondary 57M60}

\keywords{surface, periodic mapping class, normalizer, centralizer, liftable mapping class group}

\begin{abstract}
For $g\geq 2$, let $\text{Mod}(S_g)$ be the mapping class group of closed orientable surface $S_g$ of genus $g$. In this paper, we derive a finite generating set for the liftable mapping class groups corresponding to finite-sheeted regular branched cyclic covers of spheres. As an application, we provide an algorithm to derive presentations of these liftable mapping class groups, and the normalizers and centralizers of periodic mapping classes corresponding to these covers. Furthermore, we determine the isomorphism classes of the normalizers of irreducible periodic mapping classes in $\text{Mod}(S_g)$. Moreover, we derive presentations for the liftable mapping class groups corresponding to covers induced by certain reducible periodic mapping classes. Consequently, we derive a presentation for the centralizer and normalizer of a reducible periodic mapping class in $\text{Mod}(S_g)$ of the highest order $2g+2$. As final applications of our results, we recover the generating sets of the liftable mapping class groups of the hyperelliptic cover obtained by Birman-Hilden and the balanced superelliptic cover obtained by Ghaswala-Winarski.   
\end{abstract}

\maketitle

\section{Introduction}
\label{sec:intro}
For integers $g,k \geq 0$, let $S_{g,k}$ be a connected, closed, and orientable surface of genus $g$ with $k$ marked points (with the understanding that $S_{g,0}$ is denoted by $S_g$), and let $\map(S_{g,k})$ be the mapping class group of $S_{g,k}$. It is a natural question to ask whether one can obtain a presentation for the normalizers and the centralizers of periodic mapping classes. Birman-Hilden obtained~\cite{birman71} a presentation for the normalizer of a hyperelliptic involution. Furthermore, they showed \cite{birman73} that for certain finite-sheeted regular branched covers of hyperbolic surfaces, the normalizers of periodic mapping classes arise as the lifts of the liftable mapping class groups associated with these covers (see Subsection \ref{subsec:bh_theory}). Consequently, they obtained a presentation for the normalizers of periodic mapping classes for covers over the sphere, assuming that every mapping class lifts. Recently, Ghaswala-Winarski~\cite{winarski17} classified the finite-sheeted regular cyclic branched coverings over the sphere under which every mapping class lifts. In a subsequent work~\cite{ghaswala17}, they obtained a presentation for the liftable mapping class group of a balanced superelliptic cover. More recently~\cite{hirose22}, Hirose-Omori obtained another presentation of the liftable mapping class group of the balanced superelliptic cover, hence its normalizer. For unbranched regular cyclic covers over non-spherical surfaces, a generating set for the liftable mapping class group has been constructed in \cite{rajeevsarathy211,rajeevsarathy212}.

As the main result of this paper (see Theorem~\ref{thm:main} and Corollary \ref{cor:main}), we generalize these results by providing a generating set for the liftable mapping class groups associated with arbitrary cyclic branched covers of spheres. As an application, we provide an algorithm (see Algorithm~\ref{algo:main}) for obtaining a presentation for the liftable mapping class groups, the normalizers, and the centralizers of these periodic mapping classes. To prove our results, we have used an algebraic characterization of the Birman-Hilden property due to Broughton~\cite{broughton92} (see Theorem \ref{thm:norm_cent}) and the theory of geometric realizations of cyclic actions described in \cite{rajeevsarathy19}.

In \cite[Example 4.1]{broughton92}, the author has described the isomorphism classes of the normalizers of irreducible periodic mapping classes of prime orders. First, we generalize this result to the class of all irreducible periodic mapping classes (see Proposition \ref{prop:norm_irr}). A similar result has been obtained in \cite[Proposition 4.12]{rajeevsarathy22} for the centralizers of irreducible periodic mapping classes. Furthermore, we classify the isomorphism types of the normalizers and the centralizers of all irreducible periodic mapping classes (up to conjugacy and power) in $\map(S_3)$ along with the conjugacy classes of their generators (see Table \ref{tab:first}).

For even integer $g>0$, let $F' \in \map(S_{g/2})$ be an irreducible periodic mapping class represented by a $2\pi/n$-rotation $\F'$ of a polygon $\mathcal{P}$ (following the theory developed in \cite{rajeevsarathy19}). We take two copies of $\mathcal{P}$ such that one supports $\F'$ and another supports $\F'^{-1}$. We remove open (invariant) disks about the center of these polygons and glue the resultant boundaries. This gives a reducible periodic mapping class $F \in \map(S_g)$ of order $n$. As an application of Algorithm \ref{algo:main}, we have obtained a presentation for the liftable mapping class group $\lmap_p(S_{0,4})$ of the cover $p$ corresponding to $F$ (see Proposition \ref{prop:present}).

\begin{prop*}
\label{prop:present_lmod}
For integers $g>0$ and $n>g+1$ such that $g$ is even, let $F \in \map(S_g)$ be a periodic mapping class as described above. Then:
\begin{enumerate}[(i)]
\item $\lmap(S_{0,4}) = \left \langle a, b, c \mid a^2=b^2,~[a,b]=(ac)^2=(bc)^2=1 \right \rangle$, if the corresponding orbifold of $F$ has a branch point of order $2$, and
\item $\lmap(S_{0,4}) = \left \langle a, b, c \mid c^2=[a,c]=[b,c]=1\right \rangle$, otherwise, 
\end{enumerate}
where, $a$ and $b$ represent half-twists, and $c$ represents a Dehn twist in $\map(S_{0,4})$.
\end{prop*}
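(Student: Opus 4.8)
The plan is to run Algorithm~\ref{algo:main} on the branched cover $p\colon S_g\to S_{0,4}$ determined by $F$. The first stage is to describe $p$ explicitly. Using the geometric realization theory of~\cite{rajeevsarathy19}, the irreducible class $F'$ corresponds to a $\Z_n$-action on $S_{g/2}$ whose quotient is a triangle orbifold $S^2(n,n_2,n_3)$, the order-$n$ cone point being the image of the center of $\mathcal P$. Deleting invariant disks about the two centers and regluing identifies $S_g/\langle F\rangle$ with a sphere carrying four cone points of orders $n_2,n_3,n_2,n_3$, so $p$ is a $\Z_n$-cover of $S_{0,4}$; I would record the associated surface-group epimorphism $\phi\colon\pi_1(S_{0,4}\setminus B)\to\Z_n$, noting that the two copies of $\mathcal P$ contribute \emph{opposite} local monodromy data $\pm\ell_2,\pm\ell_3$. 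Realizability of $F'$ forces $n=\lcm(n_2,n_3)$, and then Riemann--Hurwitz together with $n>g+1$ forces $n_2\neq n_3$; in particular at most one of $n_2,n_3$ equals $2$, which is exactly the dichotomy in the statement.

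Next, by Corollary~\ref{cor:main}, $\lmap_p(S_{0,4})$ is generated by an explicit finite set of half-twists and Dehn twists distilled from the standard generators of $\map(S_{0,4})$ via the liftability criterion ``$\phi\circ f_\ast=u\phi$ for some $u\in\Z_n^{\times}$''. A direct check shows that every Dehn twist about a curve separating two cone points lifts with $u=1$, so $\pmap(S_{0,4})\le\lmap_p(S_{0,4})$, while a half-twist interchanging two equal-order cone points lifts precisely when a compatible unit $u$ exists --- a condition controlled by whether the relevant cone order equals $2$. Carrying this out isolates the generators $a,b,c$ of the statement, with $a,b$ half-twists and $c$ a Dehn twist.

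For the relations, I would feed a known finite presentation of $\map(S_{0,4})$ into the relation-finding step of Algorithm~\ref{algo:main} --- essentially a coset-enumeration / Reidemeister--Schreier computation for the finite-index subgroup $\lmap_p(S_{0,4})$ --- and simplify the resulting relators in terms of $a,b,c$. When the orbifold has a cone point of order $2$, the accompanying sign-reversing symmetry forces $a^2=b^2$ together with $(ac)^2=(bc)^2=1$; otherwise the half-twists acquire independent lifts and $\lmap_p(S_{0,4})$ splits as a free group on $a,b$ times a central $\Z_2$ generated by $c$, giving the second presentation. (In the paper's later applications these presentations are then passed through the Birman--Hilden/Broughton machinery of Theorem~\ref{thm:norm_cent}.)

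The hard part will be establishing \emph{completeness}: that the short relator lists above already present $\lmap_p(S_{0,4})$. This requires pinning down the exact index $[\map(S_{0,4}):\lmap_p(S_{0,4})]$ in each case, choosing a Schreier transversal that keeps the rewritten relators manageable, and collapsing them by hand to the stated lists; a secondary nuisance is running the two sub-cases $n_2=2$ and $n_3=2$ of case~(i) in parallel and confirming that no liftable mapping classes beyond $a,b,c$ have been overlooked.
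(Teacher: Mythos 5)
Your overall strategy is the paper's own: this is Algorithm~\ref{algo:main} applied to the cover $p\colon S_g\to S_{0,4}$, with the dichotomy on whether a cone point has order $2$, the inclusion $\pmap(S_{0,4})\le \lmap_p(S_{0,4})$, and a presentation of $\lmap_p(S_{0,4})$ read off from an extension. The gaps are in the two places where the actual work lives. First, the determination of \emph{which} half-twists lift is asserted rather than proved. The correct criterion is the one you state in passing: the half-twist swapping the two order-$n_1$ cone points lifts iff there is a unit $u\in\Z_n^{\times}$ with $u\equiv -1\pmod{n_1}$ and $u\equiv 1\pmod{n_2}$, and symmetrically for the other pair. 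Your gloss that this is ``controlled by whether the relevant cone order equals $2$'' does not follow: by CRT such a $u$ exists whenever $\gcd(n_1,n_2)\mid 2$, e.g.\ for $n_1=3$, $n_2=4$, $n=12$, $g=6$ one may take $u=5$, and this instance satisfies every hypothesis of case~(ii). So the image $H_1$ of $\lmap_p(S_{0,4})$ in $\Sigma_4$ is not pinned down by your argument. The paper's proof computes $H_2=\langle(1,2)\rangle$ from Theorem~\ref{thm:main} and exhibits $\sigma_3$ (case~(i)) resp.\ $\sigma_1^{-1}\sigma_3$ (case~(ii)) as explicit liftable classes with multiplier $-1$; you need at least that level of explicitness, \emph{plus} an argument that nothing else lifts, which is exactly where the congruence analysis above bites.

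Second, you outsource the relations to a Reidemeister--Schreier computation inside $\map(S_{0,4})$. That is a legitimate but genuinely different (and heavier) route from the paper's, which presents $\lmap_p(S_{0,4})$ via Lemma~\ref{lem:presen} as an extension $1\to\pmap(S_{0,4})\to\lmap_p(S_{0,4})\to H_1\to 1$: since $\pmap(S_{0,4})$ is free of rank $2$ on $a_{12},a_{13}$ there are no kernel relations, the lifted relations are $\sigma_1^2=\sigma_3^2$ (because $\sigma_3^2$ is the twist about the curve enclosing the last two marked points, which on $S_{0,4}$ is isotopic to the one enclosing the first two --- not a consequence of any ``sign-reversing symmetry'') and $[\sigma_1,\sigma_3]=1$, and the entire content of $(ac)^2=(bc)^2=1$ is the pair of conjugation relations $\sigma_i a_{13}\sigma_i^{-1}=a_{13}^{-1}\sigma_i^{-2}$, which must be verified geometrically and which your sketch never produces. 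Finally, your reading of case~(ii) is off: there $a,b$ are the Dehn twists $a_{12},a_{13}$ generating $\pmap(S_{0,4})$ and $c$ is the order-two class $\sigma_1^{-1}\sigma_3$ (no Dehn twist about an essential curve in $\map(S_{0,4})$ has order $2$), so ``free on the half-twists $a,b$ times a central $\Z_2$ generated by a Dehn twist'' is not what is being claimed.
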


\noindent  As a consequence of the presentation derived in Proposition \ref{prop:present_lmod}, we have derived a presentation for the normalizer $N(F)$ and the centralizer $C(F)$ of $F$ when $n=2g+2$.
\begin{cor*}
\label{cor*:2g+2}
For an even integer $g>0$, let $F \in \map(S_g)$ be a periodic mapping class of order $2g+2$ as described above. Then: 
\begin{enumerate}[(i)]
\item $C(F)=\left\langle F, G_1, G_2 \mid F^{2g+2}=[G_1,F]=[G_2,F]=1, ~(G_1G_2)^2=F^{g+2}\right\rangle
$ and
\item \vspace{1mm} $N(F) =\begin{array}{rl}
 \big\langle F, G_1, G_2, G_3 & \mid F^{2g+2}= [G_1,F]=[G_2,F]=[G_1,G_3]=1,\\
& (G_1G_2)^2=F^{g+2}, G_3FG_3^{-1}=F^{-1}, G_1^2=G_3^2, (G_3G_2)^2=F^{g+1} \big\rangle,
 \end{array}$
\end{enumerate}
where the $G_i$ are pseudo-periodic mapping classes.
\end{cor*}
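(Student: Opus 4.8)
The plan is to run Algorithm~\ref{algo:main} on the presentation of $\lmap_p(S_{0,4})$ furnished by Proposition~\ref{prop:present_lmod}, specialized to $n=2g+2$, together with the Birman--Hilden correspondence of Theorem~\ref{thm:norm_cent}. \emph{Step 1 (which case of Proposition~\ref{prop:present_lmod}).} When $n=2g+2$ the irreducible map $F'\in\map(S_{g/2})$ is forced to be the periodic map of maximal order $2g+2$ on $S_{g/2}$, whose quotient orbifold is $S^2(2,g+1,2g+2)$ with the cone point of order $2g+2$ sitting at the centre of the realizing polygon. The doubling construction removes an invariant disk about this centre in each of the two copies, so the quotient orbifold of $F$ is $S^2(2,g+1,2,g+1)$; since $g$ is even and positive we have $g+1\geq 3$, so this orbifold carries a branch point of order $2$ and Proposition~\ref{prop:present_lmod}(i) applies:
\[
\lmap_p(S_{0,4})=\langle a,b,c \mid a^2=b^2,\ [a,b]=(ac)^2=(bc)^2=1\rangle,
\]
where $a$ is the half-twist interchanging the two order-$2$ branch points, $b$ the half-twist interchanging the two order-$(g+1)$ branch points, and $c$ a Dehn twist whose underlying curve is read off from the proof of Proposition~\ref{prop:present_lmod}.

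\emph{Step 2 (the extension and the action on $\langle F\rangle$).} The base orbifold $S^2(2,g+1,2,g+1)$ is hyperbolic, so Theorem~\ref{thm:norm_cent} gives the short exact sequence $1\to\langle F\rangle\to N(F)\xrightarrow{\pi}\lmap_p(S_{0,4})\to 1$ with deck group $\langle F\rangle\cong\Z_{2g+2}$; conjugation yields $\theta\colon\lmap_p(S_{0,4})\to\aut(\langle F\rangle)$, and $C(F)=\pi^{-1}(\ker\theta)$. A half-twist lifts to a mapping class conjugating $F$ to $F^{\varphi}$, where $\varphi$ is the unit of $\Z_{2g+2}$ realizing the induced permutation of the branch monodromies; since the two order-$2$ points carry the same monodromy (the unique element $g+1$ of order $2$) and the two order-$(g+1)$ points carry monodromies that differ by a sign, the only admissible units are $\varphi=1$ for $a$ and $\varphi=-1$ for $b$, while $c$ acts trivially on $H_1(S_{0,4})$ and so lifts to a mapping class commuting with $F$. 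Hence $\theta(a)=\theta(c)=1$, $\theta(b)=-1$, the subgroup $\ker\theta=\langle a,c\rangle$ has index $2$, and choosing lifts $G_1,G_2,G_3$ of $a,c,b$ respectively gives $[G_1,F]=[G_2,F]=1$ and $G_3FG_3^{-1}=F^{-1}$.

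\emph{Step 3 (lifting the relators and assembling).} Each defining relator $w$ of $\lmap_p(S_{0,4})$ lifts to a word lying in $\langle F\rangle=\ker\pi$, hence equals $F^{c(w)}$ for a well-defined $c(w)\in\Z_{2g+2}$. Evaluating these lifts on a convenient arc in the polygonal model of $F$ --- exactly the type of computation carried out in the proof of Proposition~\ref{prop:present} --- gives $c(a^2b^{-2})=c([a,b])=0$, $c((ac)^2)=g+2$ and $c((bc)^2)=g+1$. Feeding the generators $G_1,G_2,G_3$, the action data of Step~2, and these exponents into the standard presentation of a group extension produces the asserted presentation of $N(F)$. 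For $C(F)$ one presents the index-$2$ subgroup $\ker\theta=\langle a,c\rangle\cong\langle a,c\mid (ac)^2=1\rangle$ via Reidemeister--Schreier with transversal $\{1,b\}$, and then lifts this along the central extension $1\to\Z_{2g+2}\to C(F)\to\langle a,c\rangle\to 1$ with $G_1,G_2$ commuting with $F$, obtaining the stated presentation of $C(F)$.

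\emph{Main obstacle.} The delicate point is Step~3: determining the exact power of $F$ carried by each lifted relator, in particular separating $(G_1G_2)^2=F^{g+2}$ from $(G_3G_2)^2=F^{g+1}$. This requires an orientation- and basepoint-consistent bookkeeping of the lifts $G_i$ against the explicit polygonal realization of $F$ coming from the doubling construction and the theory of geometric realizations; a sign slip or an off-by-one there turns $g+2$ into $g+1$ or $g$ (or interchanges the two exponents), so the choices of arcs, of the senses of the two half-twists, and of the distinguished generator of $\langle F\rangle$ must all be fixed precisely before the computation is carried out.
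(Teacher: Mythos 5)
Your overall strategy coincides with the paper's: identify the correct case of Proposition~\ref{prop:present}, use the Birman--Hilden exact sequences of Theorem~\ref{thm:norm_cent} to see that the lifts of $\sigma_1$ and $a_{13}$ centralize $F$ while the lift of $\sigma_3$ inverts it, and then apply the extension-presentation lemma (Lemma~\ref{lem:presen}). Your Step~1 and Step~2 are sound, and your dictionary between relators and exponents ($a^2b^{-2},[a,b]\mapsto F^0$, $(ac)^2\mapsto F^{g+2}$, $(bc)^2\mapsto F^{g+1}$) agrees with the paper's conclusion.

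The gap is exactly the one you flag yourself: Step~3 asserts the exponents rather than deriving them, and those four exponents are the entire content of the corollary beyond Proposition~\ref{prop:present}. ``Evaluating these lifts on a convenient arc'' is not a proof until the lifts are pinned down; in particular you never specify $G_1,G_2,G_3$ as concrete mapping classes, so there is nothing yet to evaluate. The paper closes this by writing everything explicitly in the two-polygon model (Figure~\ref{fig:hexagons} for $g=2$): $F=T_{ab}T_{bc}T_{ef}^{-1}T_{de}^{-1}$, $G_1=T_{c_1}T_{c_2}T_{c_3}$ (a multitwist along the three preimage components of the curve supporting the half-twist $\sigma_1$), $G_2=T_{ab}T_{bc}$ lifting $a_{13}$, and $G_3=G_1G$ where $G=(T_{ab}T_{bc}T_{c_3}T_{de}T_{ef})^3$ is the handle swap lifting $\sigma_1^{-1}\sigma_3$. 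It then determines each exponent not by tracking arcs but by a homological shortcut: each lifted relator word is known a priori to lie in the finite group $\langle F\rangle$, on which the symplectic representation is faithful (the Torelli group is torsion-free), so computing $4\times 4$ integer matrices identifies the power of $F$ unambiguously and removes precisely the sign/off-by-one ambiguity you worry about. If you adopt that device your argument becomes complete for $g=2$; note that even the paper only performs the computation there and asserts that the general case is analogous, so for arbitrary even $g$ you would still need to exhibit the general polygonal model and the corresponding matrices (or a uniform arc-chasing argument) to justify the exponents $g+2$ and $g+1$.
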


\noindent Furthermore, for periodic mapping classes described above, when $g+1<n<2g$, we have derived the following presentation for the centralizer $C(F)$ of $F$.

\begin{cor*}
For integers $g>0$ and $g+1<n<2g$ such that $g$ is even, let $F \in \map(S_g)$ be a periodic mapping class as described above. Then:  
$$C(F) = \langle F,G_1,G_2 \mid F^n=[G_1,F]=[G_2,F]=1 \rangle,$$
where the $G_i$ are pseudo-periodic mapping classes.
\end{cor*}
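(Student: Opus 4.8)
The plan is to follow the strategy used for the preceding corollary (the case $n=2g+2$), specializing Algorithm~\ref{algo:main} to the presentation of $\lmap_p(S_{0,4})$ supplied by Proposition~\ref{prop:present}. To begin, $F$ determines the regular $n$-sheeted branched cover $p\colon S_g\to S_{0,4}$ with deck group $\langle F\rangle\cong\Z/n$, and since the quotient orbifold $\Orb(F)$ is a hyperbolic $4$-orbifold this cover has the Birman--Hilden property. Hence Theorem~\ref{thm:norm_cent} applies and yields a central extension $1\to\langle F\rangle\to N(F)\xrightarrow{\ \rho\ }\lmap_p(S_{0,4})\to 1$ in which $C(F)=\rho^{-1}(L)$, where $L\leq\lmap_p(S_{0,4})$ is the subgroup of liftable classes whose lifts induce the trivial automorphism of $\langle F\rangle$. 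The involution swapping the two copies of $\mathcal P$ conjugates $F$ to $F^{-1}$, so $[N(F):C(F)]\geq 2$, and one verifies that this index is exactly $2$; equivalently, $L$ has index two in $\lmap_p(S_{0,4})$.

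The next step is to pin down the branch data of $p$. By construction, the quotient orbifold of $\F'$ is a sphere with three cone points, one of order $n$ sitting at the image of the centre of $\mathcal P$; deleting that point and doubling shows that the four cone orders of $\Orb(F)$ occur in two matched pairs, each entry dividing $n$. A short Riemann--Hurwitz computation, using the hypothesis $g+1<n<2g$, shows that none of these cone orders equals $2$; this is precisely where the hypothesis enters, and where the present case separates from $n=2g+2$, in which $\F'$ is the maximal-order rotation of $S_{g/2}$ and forces cone points of order $2$ in $\Orb(F)$. Consequently $\Orb(F)$ has no branch point of order $2$, and Proposition~\ref{prop:present} gives $\lmap_p(S_{0,4})=\langle a,b,c\mid c^2=[a,c]=[b,c]=1\rangle$ with $a,b$ half-twists and $c$ a Dehn twist, so that $\lmap_p(S_{0,4})\cong\Z/2\times\langle a,b\rangle$ with $\langle a,b\rangle$ free of rank two.

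It remains to carry out the lifting step of Algorithm~\ref{algo:main}. Using the geometric model of \cite{rajeevsarathy19}, I would produce explicit lifts: the half-twists $a$ and $b$ lift to pseudo-periodic mapping classes $G_1,G_2$ commuting with $F$, so $L$ contains $\langle a,b\rangle$; since $\langle a,b\rangle$ already has index two in $\lmap_p(S_{0,4})$ and $[\lmap_p(S_{0,4}):L]=2$, we conclude $L=\langle a,b\rangle$, which is free. Now $F^n=[G_1,F]=[G_2,F]=1$ hold because $F$ is central in $N(F)$, and a central extension of a free group splits; therefore $C(F)=\rho^{-1}(L)\cong\langle F\rangle\times\langle a,b\rangle=\langle F,G_1,G_2\mid F^n=[G_1,F]=[G_2,F]=1\rangle$, as claimed, with $G_1,G_2$ pseudo-periodic. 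The main obstacle is this last identification: one must check that $L$ really is free --- equivalently, that the relator-lifts coming from the $c$-relations of $\lmap_p(S_{0,4})$ contribute nothing new to $C(F)$ --- in contrast with the $n=2g+2$ case, where these relator-lifts produce the extra relation $(G_1G_2)^2=F^{g+2}$. Since that contrast is governed entirely by the cone orders of $\Orb(F)$, the real work lies in the orbifold bookkeeping that rules out an order-$2$ branch point.
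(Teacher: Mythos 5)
Your argument is correct and follows the paper's route: both reduce the claim to Proposition \ref{prop:present}, using the hypothesis $g+1<n<2g$ together with Riemann--Hurwitz to rule out an order-$2$ cone point, so that $\clmap_p(S_{0,4})=\pmap(S_{0,4})$ is free of rank $2$ and the extension $1\to\langle F\rangle\to C(F)\to\clmap_p(S_{0,4})\to 1$, being central with free quotient, splits as a direct product (the paper phrases this last step via Lemma \ref{lem:presen} rather than the splitting of extensions over free groups, but the content is identical). The only slip is calling the full extension $1\to\langle F\rangle\to N(F)\to\lmap_p(S_{0,4})\to 1$ \emph{central} --- it is not, since $[N(F):C(F)]=2$ --- but you only invoke centrality over $C(F)$, where it does hold.
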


As final applications of Theorem \ref{thm:main}, in Subsection \ref{subsec:lift_bh}, we recover the generating sets of the liftable mapping class groups of a hyperelliptic cover obtained by Birman-Hilden \cite{birman71} and a balanced superelliptic cover obtained by Ghaswala-Winarski \cite{ghaswala17} (see Proposition \ref{prop:b-h} - \ref{prop:g-w}). This paper is organized as follows. In Section~\ref{sec:prelim}, we provide a succinct review of the concepts and results used in this paper. In Section~\ref{sec:liftable}, we derive the main result, and its applications are described in Section~\ref{sec:app}.
  
\section{Preliminaries}
\label{sec:prelim}
For integers $g,k \geq 0$, let $S_{g,k}$ be a connected, closed, and orientable surface of genus $g$ with $k$ marked points. Let $\homeo(S_{g,k})$ be the group of all orientation-preserving self-homeomorphisms of $S_{g,k}$, which preserve the set of marked points. The \textit{mapping class group} of $S_{g,k}$, denoted by $\map(S_{g,k})$, is the group of path components of $\homeo(S_{g,k})$. The elements of $\map(S_{g,k})$ are called \textit{mapping classes}. We will write $S_g$ for $S_{g,k}$ whenever $k=0$. Throughout this article, we assume that $g\geq 2$ whenever $k=0$, unless mentioned otherwise.

\subsection{Cyclic actions on surfaces} A finite cyclic group $G$ is said to \textit{act on $S_g$ via homeomorphisms} if there exist an injective homomorphism $\epsilon: G \to \homeo(S_g)$. Let $\aut(G)$ be the automorphism group of $G$. For $i=1,2$, two finite cyclic group actions $\epsilon_i:G \to \homeo(S_g)$ on $S_g$ are said to be \textit{conjugate} if there exist an $h\in \homeo(S_g)$ and an automorphism $\omega \in \aut(G)$ such that 
$\epsilon_2(x)=h\epsilon_1(\omega(x))h^{-1}$ for every $x\in G$. We always identify $G$ with its image $\epsilon(G)$ in $\homeo(S_g)$. Consider the natural quotient map $\pi: \homeo(S_g)\to \map(S_g)$. By the Nielsen realization theorem \cite{nielsen43,fenchel48,kerckhoff80,kerckhoff83}, any finite cyclic subgroup of $\map(S_g)$ is an isomorphic image of a finite cyclic subgroup of $\homeo(S_g)$ under the projection map $\pi$. The following result (see \cite[Proposition 2.1]{broughton92}) is a consequence of the Nielsen realization theorem and a result of Macbeath \cite[Theorem 3]{macbeath67}.

\begin{prop}
\label{prop:group_action}
For a finite cyclic group $G$ acting on $S_g$, the map $G\to \pi(G)$ is a one-to-one correspondence between conjugacy classes of finite cyclic group actions on $S_g$ and conjugacy classes of finite cyclic subgroups of $\map(S_g)$.
\end{prop}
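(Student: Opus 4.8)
The plan is to prove this in two halves: surjectivity of the map on conjugacy classes, and injectivity. Surjectivity is immediate from the Nielsen realization theorem, which is already invoked in the excerpt: any finite cyclic subgroup $H \leq \map(S_g)$ lifts under $\pi$ to a finite cyclic subgroup $G \leq \homeo(S_g)$ with $\pi(G) = H$, and conjugate subgroups of $\map(S_g)$ are clearly realized by conjugate actions (conjugating homeomorphisms descend to conjugating mapping classes). So the content is injectivity: if two finite cyclic actions $\epsilon_1, \epsilon_2 : G \to \homeo(S_g)$ have images $\pi(\epsilon_1(G))$ and $\pi(\epsilon_2(G))$ that are conjugate as subgroups of $\map(S_g)$, then $\epsilon_1$ and $\epsilon_2$ are conjugate as actions in the sense defined above (i.e. up to an $h \in \homeo(S_g)$ and an $\omega \in \aut(G)$).

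First I would reduce to the case where the two images $\pi(\epsilon_1(G))$ and $\pi(\epsilon_2(G))$ are literally equal as subgroups of $\map(S_g)$, rather than merely conjugate, by absorbing a conjugating mapping class (lifted to a homeomorphism) into $h$. So suppose $\pi(\epsilon_1(G)) = \pi(\epsilon_2(G)) =: H$, a cyclic subgroup of $\map(S_g)$ of order $n = |G|$. Now both $\epsilon_1(G)$ and $\epsilon_2(G)$ are finite cyclic subgroups of $\homeo(S_g)$ mapping isomorphically onto $H$ under $\pi$. The key input here is Macbeath's theorem (\cite[Theorem 3]{macbeath67}), which in this context says that a finite cyclic subgroup of $\homeo(S_g)$ projecting isomorphically to a given finite cyclic subgroup of $\map(S_g)$ is unique up to conjugacy in $\homeo(S_g)$ — equivalently, the section of $\pi$ over a finite cyclic subgroup is essentially unique. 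Thus there is an $h \in \homeo(S_g)$ with $h\,\epsilon_1(G)\,h^{-1} = \epsilon_2(G)$ as subgroups.

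It remains to account for the automorphism $\omega$. After conjugating by $h$, we have two injective homomorphisms $\epsilon_2$ and $h\epsilon_1(\cdot)h^{-1}$ from $G$ onto the same subgroup of $\homeo(S_g)$; hence they differ by precomposition with an automorphism $\omega \in \aut(G)$, namely $\omega = \epsilon_2^{-1} \circ (h\,\epsilon_1(\cdot)\,h^{-1})$, giving exactly $\epsilon_2(x) = h\,\epsilon_1(\omega(x))\,h^{-1}$ for all $x \in G$. This is the required conjugacy of actions. The main obstacle — and the step I would expect to need the most care — is the injectivity argument, specifically isolating precisely what Macbeath's theorem delivers: one must check that the relevant hypotheses (the surface has genus $g \geq 2$, so $\homeo(S_g)$ is suitably well-behaved, and the subgroup is cyclic) are in force so that lifts of a cyclic subgroup through $\pi$ form a single conjugacy class. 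The surjectivity direction and the bookkeeping with $\omega$ are routine by comparison.
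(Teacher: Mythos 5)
Your proposal is correct and follows exactly the route the paper itself indicates: the paper offers no written proof, only the remark that the statement is a consequence of the Nielsen realization theorem (your surjectivity step) together with Macbeath's theorem (your injectivity step, via uniqueness up to conjugacy in $\homeo(S_g)$ of a finite subgroup over a given finite subgroup of $\map(S_g)$), with the automorphism $\omega$ absorbing the ambiguity in parametrizing the cyclic group. Your fleshed-out argument, including the reduction to equal images and the observation that $\pi$ is injective on finite subgroups for $g \geq 2$, is a faithful and complete expansion of that citation.
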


By the Nielsen realization theorem, a periodic mapping class $F \in \map(S_g)$ is represented by an $\F\in \homeo(S_g)$, known as its \textit{Nielsen representative} of order $|F|$. Furthermore, $\F$ can be chosen as an isometry of some hyperbolic metric on $S_g$. Thus, the $\langle \F \rangle$-action on $S_g$ induces a finite-sheeted regular branched cover $p: S_g \rightarrow \Orb_F$, where $\Orb_F$ denotes the \textit{quotient orbifold} $S_g/\langle \F \rangle$. The cover $p$ has branched points $x_1, x_2, \dots , x_k$ of orders $n_1, n_2, \dots, n_k $, respectively, where each $n_i \mid |F|$. Topologically, $\Orb_F$ is homeomorphic to $S_{g_0,k}$, for some integer $g_0\geq 0$ known as the \textit{orbifold genus} of $\Orb_F$. The tuple $(g_0;n_1,n_2,\dots,n_k)$ is called the \textit{signature} of $\Orb_F$ and will be denoted by $\Gamma(\Orb_F)$. Throughout this article, in a given signature $(g_0;n_1,n_2,\dots,n_k)$, the $n_i$'s are arranged so that $n_1\leq n_2\leq \dots\leq n_k$. By the multiplicativity of the Euler characteristic under the cover $p$, one can deduce the Riemann-Hurwitz formula:
\begin{equation}
\label{eqn:rh}
\frac{2g-2}{n} = 2g_0-2 + \sum_{i=1}^{k} \left( 1- \frac{1}{n_i} \right).
\end{equation}

Since $g \geq 2$, $\Orb_F$ is a hyperbolic orbifold with universal cover isometric to the Poincare upper half plane $\mathbb{H}$. The deck group of the universal cover $\mathbb{H}\to \Orb_F$, denoted by $\pi_1^{orb}(\Orb_F)$, is called the \textit{orbifold fundamental group} of $\Orb_F$. From the orbifold covering space theory \cite[Chapter 13]{thurston}, we have the following short exact sequence
\begin{equation}
\label{eqn:ses}
1 \rightarrow \pi_1(S_g) \rightarrow \pi_1^{orb}(\Orb_F) \xrightarrow{\eta} \langle F \rangle \rightarrow 1.
\end{equation}
The group $\pi_1^{orb}(\Orb_F)$ is a co-compact Fuchsian group (see \cite{macbeath,katok}) and admits a presentation of the following form:
\begin{equation}
\label{eqn:gamma}
\left\langle \alpha_1, \beta_1, \dots , \alpha_{g_0}, \beta_{g_0}, \gamma_1, \dots , \gamma_k \mid \gamma_i^{n_i} = \prod_{j=1}^{g_0}[\alpha_j,\beta_j] \prod_{i=1}^{k} \gamma_i = 1 \right\rangle.
\end{equation}
A group having a presentation of the form (\ref{eqn:gamma}) will be denoted by $\Gamma$. The homomorphism $\eta$ from the sequence~(\ref{eqn:ses}) above is known as the \textit{surface kernel map}.

\begin{rem}
We identify the cyclic group $\langle F\rangle$ with $\Z_n$ where the generator $F$ is mapped to $1\in \Z_n$ under this identification. Furthermore, $\aut(\langle F\rangle)$ will be identified with $\Z_n^{\times}$ such that the map $F\to F^{\ell}$ is identified with the map $1 \to \ell$, where $\gcd(\ell,n)=1$. Any surface kernel map associated with $F$ will be written as $\Gamma \to \Z_n$.
\end{rem}

For $\eta: \Gamma \rightarrow \Z_n$ and $1 \leq i \leq g_0, ~1 \leq j \leq k$, let $\eta(\alpha_i) = a_i$, $\eta(\beta_i) = b_i$, and $\eta(\gamma_j) = c_j$. Since $\eta$ is a surjective homomorphism with a torsion-free kernel, we have
\begin{equation}
\label{eqn:order}
|c_j| = n_j,
\end{equation}

\begin{equation}
\label{eqn:long}
\prod_{j=1}^{g_0}[a_j,b_j]\prod_{j=1}^{k} c_j=\prod_{j=1}^{k} c_j = 1, \text{ and }
\end{equation}

\begin{equation}
\label{eqn:gen}
\left\langle a_1,b_1, \dots , a_{g_0}, b_{g_0}, c_1, c_2, \dots , c_k \right\rangle = \Z_n.
\end{equation}
\noindent In classical parlance~\cite{broughton91}, an ordered $(2g_0+k)$-tuple $(a_1,b_1, \dots, a_{g_0}, b_{g_0}, c_1,c_2 \dots ,c_k)$ of elements of $\Z_n$ is called a \textit{generating $\Gamma$-vector} if it satisfies Equations (\ref{eqn:order}) - (\ref{eqn:gen}) above. 

\begin{defn}
For $i=1,2$, let $\eta_i: \Gamma \rightarrow \Z_n$ be two surface kernel maps. Then we say that \textit{$\eta_1$ is equivalent to $\eta_2$} if there exist $\delta \in \aut(\Gamma)$ and $\ell \in \Z_n^{\times}$ such that $(\ell,\delta)\cdot \eta_1:= \ell \circ \eta_1 \circ \delta=\eta_2$, where $\ell$ acts by multiplication.
\end{defn}

\noindent There is a one-to-one correspondence between the set of all surface kernel maps $\Gamma \to \Z_n$ and the set of all generating $\Gamma$-vectors \cite[Theorem 3]{harvey66}. Therefore the group $\Z_n^{\times} \times \aut(\Gamma)$ acts on the set of all surface kernel maps and consequently acts on the set of all generating $\Gamma$-vectors.

Throughout this article, the symmetric group on $k$ letters will be denoted by $\Sigma_k$. The following result (see~\cite[Theorem 5.8.2]{ZVC80}) describes automorphisms of a co-compact Fuchsian group $\Gamma$. 

\begin{theorem}
\label{thm:aut_gamma}
Let  $c=\textstyle\prod_{j=1}^{g_0}[\alpha_j,\beta_j]\prod_{i=1}^k \gamma_i$ be the element of the free group
\[
\mathbb{F}=\langle \alpha_1,\beta_1,\dots,\alpha_{g_0},\beta_{g_0},\gamma_1,\gamma_2,\dots,\gamma_k \rangle.
\]
Then each automorphism of $\Gamma$ is induced from an automorphism $\phi:\mathbb{F} \to \mathbb{F}$ such that $\phi(c),\phi(\gamma_1)$, $\dots,\phi(\gamma_k)$ are conjugate to $c^s,\gamma_{\sigma(1)}^s,\dots,\gamma_{\sigma(k)}^s$ respectively, where $s=\pm1$ and $\sigma \in \Sigma_k$ such that $n_i=n_{\sigma(i)}$. 
\end{theorem}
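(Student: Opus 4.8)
The plan is to translate the statement into one about the free group $\mathbb{F}$ and then read off the permutation $\sigma$ and the sign $s$ from the two qualitatively different kinds of relators of $\Gamma$: the torsion relators $\gamma_i^{n_i}$ and the ``long'' relator $c=\prod_{j}[\alpha_j,\beta_j]\prod_i\gamma_i$. Write $\Gamma=\mathbb{F}/N$, where $N$ is the normal closure in $\mathbb{F}$ of $\{\gamma_1^{n_1},\dots,\gamma_k^{n_k},c\}$. Given $\Phi\in\aut(\Gamma)$, choosing a word in $\mathbb{F}$ representing $\Phi$ of each free generator produces an endomorphism $\phi$ of $\mathbb{F}$ with $\phi(N)\subseteq N$ inducing $\Phi$. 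The first point is to upgrade $\phi$ to an \emph{automorphism} of $\mathbb{F}$ still inducing $\Phi$; here one uses the combinatorial theory of such Fuchsian (planar discontinuous) groups developed in \cite{ZVC80} --- essentially a Nielsen-type argument --- to correct $\phi$ within $N$. Throughout I am free to compose $\phi$ with inner automorphisms of $\mathbb{F}$, since these descend to inner automorphisms of $\Gamma$; note also that $\phi$ then restricts to an automorphism of $N$.

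\textbf{The permutation $\sigma$.} The key structural input is that the conjugacy classes of maximal finite cyclic subgroups of $\Gamma$ are exactly those of $\langle\gamma_1\rangle,\dots,\langle\gamma_k\rangle$, with $|\gamma_i|=n_i$; this follows from the embedding $\Gamma\hookrightarrow\mathrm{PSL}(2,\mathbb{R})$ (finite-order elements are elliptic, and their fixed points are the cone points of $\mathbb{H}/\Gamma$), or purely combinatorially from the graph-of-groups structure of $\Gamma$. Since $\Phi$ permutes these conjugacy classes and preserves orders, there are $\sigma\in\Sigma_k$ with $n_i=n_{\sigma(i)}$ and exponents $s_i$ with $\gcd(s_i,n_i)=1$ such that $\phi(\gamma_i)$ is conjugate to $\gamma_{\sigma(i)}^{s_i}$ in $\Gamma$. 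A further application of the $\mathbb{F}$-level combinatorics (the cyclic subgroups $\langle\gamma_i\rangle$ behave well under the correction process, reflecting that $\gamma_i$ is a small loop about the $i$-th cone point) lets me arrange, after one more free-automorphism correction, that $\phi(\gamma_i)$ is conjugate to $\gamma_{\sigma(i)}^{s_i}$ already in $\mathbb{F}$.

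\textbf{The sign $s$.} Since $c$ maps to $1$ in $\Gamma$, we have $\phi(c)\in N$. Using the structure of $N$ as a normal subgroup --- its relation-module/homological data, in which, morally, the ``long'' relator $c$ contributes a trivial $\Z$-summand that the automorphism $\phi$ can only scale by $\pm1$ --- one obtains that $\phi(c)$ is conjugate in $\mathbb{F}$ to $c^{s}$ with $s=\pm1$. Granting this, abelianizing the identity $c=\prod_j[\alpha_j,\beta_j]\prod_i\gamma_i$ in $\mathbb{F}$, where commutators and conjugations become trivial, gives $s\sum_i e_{\gamma_i}=\overline{\phi(c)}=\sum_i\overline{\phi(\gamma_i)}=\sum_i s_i e_{\gamma_{\sigma(i)}}$, so that $s_i=s$ for every $i$. (Independently, invertibility of the induced map $\phi^{\mathrm{ab}}$ on $\mathbb{F}^{\mathrm{ab}}\cong\Z^{2g_0+k}$, which carries $e_{\gamma_i}$ to $s_i e_{\gamma_{\sigma(i)}}$ and hence preserves the summand $\bigoplus_i\Z e_{\gamma_i}$, already forces $\prod_i|s_i|=1$.) Thus $\phi(c),\phi(\gamma_1),\dots,\phi(\gamma_k)$ are conjugate in $\mathbb{F}$ to $c^{s},\gamma_{\sigma(1)}^{s},\dots,\gamma_{\sigma(k)}^{s}$ with one common $s=\pm1$, which is the assertion.

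\textbf{Main obstacle.} The genuine work lies in the first point (turning the raw lift into an automorphism of $\mathbb{F}$) and in the homological identification of $\phi(c)$ up to the global sign; these are exactly the statements that the planar-discontinuous-group machinery of \cite{ZVC80} is designed to deliver, and they are what makes the $\mathbb{F}$-level (rather than merely $\Gamma$-level) conclusion possible. One can sidestep that machinery geometrically by invoking that $\out(\Gamma)$ is realized by self-homeomorphisms of the orbifold $\mathbb{H}/\Gamma$: then $\sigma$ records the induced permutation of the cone points (necessarily preserving their orders), $s=\pm1$ records whether the homeomorphism preserves or reverses orientation, and the relations between $\phi(c),\phi(\gamma_i)$ and $c^{s},\gamma_{\sigma(i)}^{s}$ come from tracking a standard system of generating loops dual to the given presentation.
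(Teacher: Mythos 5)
First, a point of reference: the paper offers no proof of this statement --- it is imported verbatim as \cite[Theorem 5.8.2]{ZVC80} --- so there is no in-paper argument to compare yours against. Judged on its own terms, your sketch correctly describes the architecture of the standard proof: lift the automorphism to the free group $\mathbb{F}$, read off $\sigma$ from the action on conjugacy classes of maximal finite cyclic subgroups, and pin down a single sign $s$ by abelianizing the long relation. The final abelianization step is a complete and correct deduction that all the local exponents $s_i$ coincide with the global sign $s$ \emph{once the earlier steps are granted}, and the parenthetical determinant argument forcing $\prod_i|s_i|=1$ is also sound.

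The difficulty is that the three steps you grant are precisely the content of the theorem, and you defer each of them back to \cite{ZVC80}: (a) that an automorphism of $\Gamma$ lifts to an \emph{automorphism} of $\mathbb{F}$, not merely an endomorphism carrying $N$ into $N$ --- this can fail for general presentations, and for planar discontinuous groups it is the main output of the Nielsen-method/binding-tie analysis you are citing; (b) that the conjugacy $\phi(\gamma_i)\sim\gamma_{\sigma(i)}^{s_i}$, which the torsion theorem for Fuchsian groups gives you only in $\Gamma$, can be promoted to a conjugacy in $\mathbb{F}$; and (c) that $\phi(c)$ is conjugate \emph{in $\mathbb{F}$} to $c^{\pm1}$, for which ``the long relator contributes a trivial $\Z$-summand of the relation module that $\phi$ can only scale by $\pm1$'' is a plausibility argument rather than a proof: two elements of $N$ with the same image in $N/[N,N]$ need not be conjugate in $\mathbb{F}$, so the homological computation by itself identifies $\phi(c)$ only modulo commutators of relators, not up to conjugacy. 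Your closing geometric alternative (realize $\out(\Gamma)$ by self-homeomorphisms of the orbifold and track a dual loop system) is a legitimate independent route, but it rests on an unproved Dehn--Nielsen--Baer-type realization theorem for these orbifold groups. In short, the proposal is an accurate roadmap of where the difficulty lives, but every genuinely hard step is black-boxed to the very reference the theorem is quoted from, so it does not constitute an independent proof.
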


\noindent An automorphism of $\Gamma$ is said to \textit{orientation-preserving} if $s=1$ in Theorem \ref{thm:aut_gamma}. Let $\aut^+(\Gamma)$ denotes the index two subgroup of $\aut(\Gamma)$ consisting of orientation-preserving automorphisms. The following result (see  \cite[Proposition 2.2]{broughton91}) is a consequence of the works of Harvey \cite[Theorem 3]{harvey66} and Macbeath \cite[Theorem 3]{macbeath67}. 

\begin{prop}
\label{prop:gen_vector}
There is a one-to-one correspondence between conjugacy classes of finite cyclic group actions on $S_g$ and $\Z_n^{\times}\times \aut^+(\Gamma)$-equivalence classes of generating $\Gamma$-vectors. 
\end{prop}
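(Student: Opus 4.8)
The plan is to construct two mutually inverse maps between the two sets in question and to verify that each descends to the stated equivalence relations.

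First I would build the forward map. Given a cyclic action $\epsilon:\Z_n\to\homeo(S_g)$ with Nielsen representative generating $\langle\F\rangle$, form the quotient orbifold $\Orb=S_g/\langle\F\rangle$ and fix an isomorphism of $\pi_1^{orb}(\Orb)$ with a Fuchsian group $\Gamma$ presented as in (\ref{eqn:gamma}). The sequence (\ref{eqn:ses}), together with a choice of generator identifying $\langle F\rangle$ with $\Z_n$, yields a surface kernel epimorphism $\eta:\Gamma\to\Z_n$, and by Harvey's bijection \cite[Theorem 3]{harvey66} between surface kernel maps and generating $\Gamma$-vectors, this produces a generating $\Gamma$-vector $(a_1,b_1,\dots,a_{g_0},b_{g_0},c_1,\dots,c_k)$. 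I would then pin down the indeterminacy in this construction, which is precisely the source of the equivalence relation: the isomorphism $\pi_1^{orb}(\Orb)\cong\Gamma$ is well-defined only up to $\aut(\Gamma)$, and since $\epsilon$ is orientation-preserving the relevant automorphisms are exactly those of $\aut^+(\Gamma)$ from Theorem \ref{thm:aut_gamma} (an orientation-reversing automorphism would alter the cover). Precomposing $\eta$ by $\delta\in\aut^+(\Gamma)$ records this choice, while the identification $\langle F\rangle\cong\Z_n$ is determined only up to a choice of generator, i.e. up to postcomposition by multiplication by some $\ell\in\Z_n^{\times}$; this matches the automorphism $\omega\in\aut(\Z_n)$ in the definition of conjugacy of actions. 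Hence the forward map is well-defined on conjugacy classes and lands in $\Z_n^{\times}\times\aut^+(\Gamma)$-equivalence classes.

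For the backward map I would invoke Macbeath's realization \cite[Theorem 3]{macbeath67}. A generating $\Gamma$-vector corresponds to a surface kernel epimorphism $\eta:\Gamma\to\Z_n$ whose kernel $K$ is torsion-free: the order conditions (\ref{eqn:order}) force each elliptic generator $\gamma_i$ to survive with its full order $n_i$, so no nontrivial torsion element of $\Gamma$ lies in $K$. Then $S:=\mathbb{H}/K$ is a closed surface whose genus equals $g$ by the Riemann-Hurwitz formula (\ref{eqn:rh}), and $\Gamma/K\cong\Z_n$ acts on $S$; after identifying $S$ with $S_g$ this is a cyclic action realizing $\eta$. The two constructions are visibly inverse to one another at the level of surface kernel maps.

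The last step, and the main obstacle, is to show that the maps respect the equivalence relations in both directions. That conjugate actions yield equivalent $\Gamma$-vectors is the easier half: a conjugating homeomorphism $h$ of $S_g$ descends to a homeomorphism of the quotient orbifolds, inducing an element of $\aut^+(\Gamma)$, while the automorphism $\omega$ of $\Z_n$ supplies the $\Z_n^{\times}$ factor. The reverse implication carries the real content: given two surface kernel maps differing by $(\ell,\delta)\in\Z_n^{\times}\times\aut^+(\Gamma)$, I must realize $\delta$ by an \emph{actual} homeomorphism of the quotient orbifold that lifts to a homeomorphism of $S_g$ conjugating one action to the other. This rests on the orbifold Dehn--Nielsen--Baer principle that every orientation-preserving automorphism of the Fuchsian group $\Gamma$ is geometrically induced, together with the fact (Proposition \ref{prop:group_action}, itself a consequence of Nielsen realization and Macbeath's theorem) that such geometric data is determined up to conjugacy by the algebraic action. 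Assembling these identifications shows that the forward map is a bijection with the inverse constructed above, which is the assertion of the proposition.
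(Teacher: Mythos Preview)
The paper does not supply a proof of this proposition; it is quoted as a known fact, attributed to \cite[Proposition 2.2]{broughton91} and described as ``a consequence of the works of Harvey \cite[Theorem 3]{harvey66} and Macbeath \cite[Theorem 3]{macbeath67}.'' So there is no argument in the paper to compare against beyond those citations.

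Your outline is a faithful reconstruction of exactly that route: Harvey's bijection between surface kernel epimorphisms and generating $\Gamma$-vectors gives the dictionary, Macbeath's theorem supplies the realization of a $\Gamma$-vector by an honest action, and the orbifold Dehn--Nielsen--Baer theorem (together with Proposition~\ref{prop:group_action}) handles the passage between algebraic equivalence and topological conjugacy. One point deserves tightening: your justification that the ambiguity in identifying $\pi_1^{orb}(\Orb)$ with $\Gamma$ lies in $\aut^+(\Gamma)$ rather than all of $\aut(\Gamma)$ is phrased as ``since $\epsilon$ is orientation-preserving,'' but that is not quite the reason. The correct argument is that one is choosing a \emph{geometric} generating system for $\pi_1^{orb}(\Orb)$ (standard loops around the handles and cone points, read with the orientation of $\Orb$ inherited from $S_g$), and two such systems differ by an element whose image in $\out(\Gamma)$ lies in $\out^+(\Gamma)\cong\map_\Gamma(S_{g_0,k})$; equivalently, this is the content of the isomorphism $\out^+(\Gamma)\cong\map_\Gamma(S_{g_0,k})$ recorded in \cite[Proposition 2.2]{broughton92}. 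With that clarification your sketch is correct and matches the references the paper invokes.
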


%\begin{rem}
%Since $\langle F \rangle $ is abelian, the action of $\Z_n^{\times}\times \aut^+(\Gamma)$ on generating $\Gamma$-vectors factors through $\Z_n^{\times}\times \out^+(\Gamma)$. Further, $\Z_n^{\times}$ and $\aut^+(\Gamma)$ have commuting action on the set of all generating $\Gamma$-vectors.
%\end{rem}

Let $\eta:\Gamma \to \Z_n$ be a surface kernel map. For $1 \leq j\leq k$, since $|c_j| = |\gamma_j| = n_j,$ we have $\eta(\gamma_j) = (n/n_j)d_j = c_j$ for some $d_j \in \Z_{n_j}^{\times}$. The $\Z_n$-action on $S_g$ can be encoded by the tuple
\[
(n,g_0;(d_1,n_1), (d_2,n_2), \dots , (d_k,n_k))
\]
known as a cyclic data set first appeared in \cite[Definition 2.1]{rajeevsarathy19}.

\begin{defn}
\label{defn:dataset}
A tuple of non-negative integers of the form
\[
D=(n,g_0;(d_1,n_1), (d_2,n_2), \dots , (d_k,n_k))
\]
is called a \textit{cyclic data set} of \textit{genus $g$ and degree $n$} if the following conditions hold.
\begin{enumerate}[(i)]
\item For each $1 \leq i \leq k$, $n_i \mid n$, $n_i \geq 2$, and $\gcd(d_i,n_i) = 1$.
\item $\lcm(n_1, \dots , \Hat{n_i}, \dots , n_k) = \lcm(n_1, n_2, \dots, n_k)$ for all $1 \leq i \leq k$.
\item If $g_0 = 0$, then $\lcm(n_1, n_2, \dots, n_k) = n$.
\item $\sum_{i=1}^{k} \frac{n}{n_i}d_i \equiv 0 \pmod{n}$.
\label{eqn:angle_sum}
\item $\frac{2g-2}{n} = 2g_0 -2 + \sum_{i=1}^{k} \left( 1- \frac{1}{n_i} \right).$
\end{enumerate}
\end{defn}

\noindent The integers $n$, $g_0$, and $g$ corresponding to a data set $D$ will be denoted by $n(D)$, $g_0(D)$, and $g(D)$ respectively. The following definition describes the equivalence of two cyclic data sets.

\begin{defn}
Two cyclic data sets
\[
(n,g_0;(d_1,n_1), (d_2,n_2), \dots , (d_k,n_k))
\]
and
\[
(n',g_0';(d_1',n_1'), (d_2',n_2'), \dots , (d_{k'}',n_{k'}'))
\]
are said to be \textit{equivalent} if the following conditions hold.
\begin{enumerate}[(i)]
\item $n=n'$, $g_0 = g_0'$, and $k = k'$.
\item there exists $\ell \in \Z_n^{\times}$, and $\sigma \in \Sigma_k$ such that $(\ell d_i,n_i) = (d_{\sigma(i)}', n_{\sigma(i)}')$ for all $1 \leq i \leq k$.
\end{enumerate}
\end{defn}

\noindent The following result originally due to Nielsen \cite{nielsen37} follows from Propositions \ref{prop:group_action}, \ref{prop:gen_vector}, and \cite[Proposition 14]{harvey71}.

\begin{prop}
The equivalence classes of cyclic data sets of genus $g$ and degree $n$ are in one-to-one correspondence with conjugacy classes of finite cyclic subgroups of $\map(S_g)$ of order $n$ or conjugacy classes of $\Z_n$-actions on $S_g$.
\end{prop}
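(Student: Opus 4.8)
The plan is to obtain the stated bijection by composing two correspondences that are already available. Proposition~\ref{prop:group_action} identifies conjugacy classes of $\Z_n$-actions on $S_g$ with conjugacy classes of cyclic subgroups of $\map(S_g)$ of order $n$, and Proposition~\ref{prop:gen_vector} identifies conjugacy classes of $\Z_n$-actions with $\Z_n^{\times}\times\aut^+(\Gamma)$-equivalence classes of generating $\Gamma$-vectors. Granting these, it suffices to produce a bijection between the set of $\Z_n^{\times}\times\aut^+(\Gamma)$-equivalence classes of generating $\Gamma$-vectors and the set of equivalence classes of cyclic data sets of genus $g$ and degree $n$. The bulk of the work lies in setting up this last bijection and, crucially, in checking that the two equivalence relations correspond under it.

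I would first define the map on representatives. Given a generating $\Gamma$-vector $(a_1,b_1,\dots,a_{g_0},b_{g_0},c_1,\dots,c_k)$ arising from a surface kernel map $\eta\colon\Gamma\to\Z_n$, use~\eqref{eqn:order} to write $c_j=(n/n_j)d_j$ for the unique $d_j\in\Z_{n_j}^{\times}$, and send the vector to $D=(n,g_0;(d_1,n_1),\dots,(d_k,n_k))$. Checking that $D$ satisfies Definition~\ref{defn:dataset} is then a checklist: condition~(i) is immediate from $|c_j|=n_j$, from the fact that cone points of a co-compact Fuchsian group have order $\geq 2$, and from the choice of $d_j$; condition~(iv) restates~\eqref{eqn:long}; condition~(v) is the Riemann--Hurwitz formula~\eqref{eqn:rh}; and conditions~(ii)--(iii), which encode that $\ker\eta$ is torsion-free (equivalently, that $\eta$ is injective on each maximal finite cyclic subgroup of $\Gamma$) together with the surjectivity~\eqref{eqn:gen}, are exactly the content of \cite[Proposition~14]{harvey71}. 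Read in reverse, the same proposition shows that every cyclic data set arises this way, so the map is surjective on representatives.

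Then I would match the equivalence relations. If $\eta_2=(\ell,\delta)\cdot\eta_1$ with $\ell\in\Z_n^{\times}$ and $\delta\in\aut^+(\Gamma)$, then by Theorem~\ref{thm:aut_gamma} (with $s=1$, since $\delta$ is orientation-preserving) $\delta$ carries each $\gamma_i$ to a conjugate of $\gamma_{\sigma(i)}$ for some $\sigma\in\Sigma_k$ with $n_i=n_{\sigma(i)}$; applying $\eta_1$ and multiplying by $\ell$, and using that $\Z_n$ is abelian so conjugation is invisible, one gets $\eta_2(\gamma_i)=\ell\,\eta_1(\gamma_{\sigma(i)})$, i.e. $(\ell d_i,n_i)=(d_{\sigma(i)}',n_{\sigma(i)}')$. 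Hence equivalent generating $\Gamma$-vectors produce equivalent cyclic data sets and the induced map on equivalence classes is well defined. The converse direction is the main obstacle: if two generating $\Gamma$-vectors have cone-point data related by some $\ell$ and $\sigma$, one must produce a genuine element of $\Z_n^{\times}\times\aut^+(\Gamma)$ relating them, and the subtlety is that the ``genus slots'' $a_i,b_i$ are not recorded in a data set and so must be shown to carry no hidden invariant. After using multiplication by $\ell$ and an orientation-preserving automorphism realizing $\sigma$ on the $\gamma_i$ (again Theorem~\ref{thm:aut_gamma}) to align the $c_j$, one is reduced to showing that any two generating $\Gamma$-vectors with the same $c_j$ differ by an automorphism fixing each $\gamma_j$ up to conjugacy — equivalently, that the subgroup of $\aut^+(\Gamma)$ fixing the cone-point generators acts transitively on the admissible tuples $(a_1,b_1,\dots,a_{g_0},b_{g_0})$ subject to~\eqref{eqn:long} and~\eqref{eqn:gen}. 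This transitivity is the Nielsen-realization half of \cite[Proposition~14]{harvey71}, and invoking it makes the induced map injective.

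Finally, composing this bijection with Propositions~\ref{prop:group_action} and~\ref{prop:gen_vector} yields the claimed one-to-one correspondence between equivalence classes of cyclic data sets of genus $g$ and degree $n$ and conjugacy classes of cyclic subgroups of $\map(S_g)$ of order $n$, equivalently conjugacy classes of $\Z_n$-actions on $S_g$.
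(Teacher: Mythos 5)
Your proposal is correct and follows essentially the same route as the paper, which proves this proposition simply by citing Propositions~\ref{prop:group_action} and~\ref{prop:gen_vector} together with Harvey's \cite[Proposition~14]{harvey71} for the passage between generating $\Gamma$-vectors and data sets. You supply more detail than the paper does (in particular, correctly isolating the transitivity on the $a_i,b_i$ slots as the point where Harvey's explicit automorphisms are needed), but the decomposition of the argument is the same; the only quibble is that the verification and realizability of conditions (ii)--(iii) of Definition~\ref{defn:dataset} trace back to Harvey's earlier \cite[Theorem~4]{harvey66} rather than to \cite[Proposition~14]{harvey71}.
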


\noindent The conditions of Definition \ref{defn:dataset} was first given by Harvey \cite[Theorem 4]{harvey66}. Harvey, in \cite[Proposition 14]{harvey71}, constructed explicit automorphisms of $\Gamma$ to establish Nielsen's result. The cyclic data set corresponding to a periodic mapping class $F \in \map(S_g)$ will be denoted by $D_F$.

\subsection{Birman-Hilden theory}
\label{subsec:bh_theory}
Let $F \in \map(S_g)$ be a periodic mapping class and $p:S_g\to \Orb_F\approx S_{g_0,k}$ be the corresponding finite-sheeted regular branched cover. The \textit{symmetric mapping class group}, denoted by $\smap_p(S_g)$, is the subgroup of $\map(S_g)$ consisting of mapping classes represented by homeomorphisms which preserve the fibers of $p$. The \textit{liftable mapping class group}, denoted by $\lmap_p(S_{g_0,k})$, is the subgroup of $\map(S_{g_0,k})$ consisting of mapping classes represented by homeomorphisms that lift under $p$. A cover is said to have the \textit{Birman-Hilden property} if two isotopic fiber-preserving homeomorphisms are fiber-isotopic. It is known that \cite{birman73,harvey75} the cover $p$ has the Birman-Hilden property. The Birman-Hilden property is equivalent to the existence of the following short exact sequence \cite{birman73,margalit21}
\begin{equation}
\label{eqn:bhseq}
1 \rightarrow \langle F \rangle \rightarrow \smap_p(S_g) \rightarrow \lmap_p(S_{g_0,k}) \rightarrow 1.
\end{equation}
Further, it is known \cite[Theorem 4]{birman73} that $\smap_p(S_g)$ is the normalizer of $F$ in $\map(S_g)$. 

Let $\Gamma(\Orb_F) = (g_0;n_1,n_2, \dots ,n_k)$ be the signature of $\Orb_F$. The subgroup of $\map(S_{g_0,k})$ consisting of mapping classes that preserve the orders of the marked points of $\Orb_F$ according to $\Gamma(\Orb_F)$ will be denoted by $\map_{\Gamma}(S_{g_0,k})$. The group $\out^+(\Gamma)$ is the image of the projection map $\aut(\Gamma)\to \out(\Gamma)$ restricted to $\aut^+(\Gamma)$, where $\out(\Gamma)$ is the group of outer automorphisms of $\Gamma$. There is an isomorphism (see \cite[Proposition 2.2]{broughton92}) between $\map_{\Gamma}(S_{g_0,k})$ and $\out^+(\Gamma)$. Consequently, throughout this article, $\map_{\Gamma}(S_{g_0,k})$ will be identified with $\out^+(\Gamma)$ and their elements will be used interchangeably.

\begin{rem}
For $F \in \map(S_g)$, the normalizer and centralizer of $F$ will be denoted by $N(F)$ and $C(F)$, respectively. The natural projection maps $\pi_1: \Z_n^{\times}\times \aut^+(\Gamma)\to \Z_n^{\times}$ and $\pi_2:\Z_n^{\times}\times \aut^+(\Gamma)\to \out^+(\Gamma)$ will be used throughout the paper.
\end{rem}

\noindent The following result of Broughton (see \cite[Theorem 3.2]{broughton92}) gives an algebraic characterization of the Birman-Hilden property.

\begin{theorem}
\label{thm:norm_cent}
Let $F \in \map(S_g)$ be a periodic mapping class and $\Gamma_F$ be a generating $\Gamma$-vector corresponding to $\langle F \rangle$. Let $\map_{\Gamma_F}$ and $\Z_n^{\times}(\Gamma_F)$ denote the group $\pi_2(\text{Stab}(\Gamma_F))$ and $\pi_1(\text{Stab}(\Gamma_F))$, where $\text{Stab}(\Gamma_F)$ is the stabilizer of $\Gamma_F$ under $\Z_n^{\times} \times \aut^+(\Gamma)$-action. Then there are exact sequences
\begin{equation}
\label{eqn:neseqn}
1 \rightarrow \langle F \rangle \rightarrow N(F) \rightarrow \map_{\Gamma_F} \rightarrow 1
\end{equation}
and
\begin{equation}
\label{eqn:ceseqn}
1 \rightarrow C(F) \rightarrow N(F) \rightarrow \Z_n^{\times}(\Gamma_F) \rightarrow 1.
\end{equation}
\end{theorem}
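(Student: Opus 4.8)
The plan is to establish the two exact sequences by carefully unwinding the Birman--Hilden dictionary already assembled in the preliminaries. The key identification is that $N(F) = \smap_p(S_g)$ (from \cite[Theorem 4]{birman73}, stated just above), so the Birman--Hilden short exact sequence~(\ref{eqn:bhseq}) already gives
\[
1 \to \langle F \rangle \to N(F) \to \lmap_p(S_{g_0,k}) \to 1,
\]
and it remains only to identify $\lmap_p(S_{g_0,k})$ inside $\map_{\Gamma}(S_{g_0,k}) \cong \out^+(\Gamma)$ with the subgroup $\map_{\Gamma_F} = \pi_2(\mathrm{Stab}(\Gamma_F))$. First I would recall that a mapping class $\bar{h} \in \map_{\Gamma}(S_{g_0,k})$, viewed as an element $[\delta] \in \out^+(\Gamma)$, lifts under $p$ if and only if it carries the subgroup $\ker(\eta_F) = \pi_1(S_g) \trianglelefteq \Gamma$ to itself, equivalently $\eta_F \circ \delta$ has the same kernel as $\eta_F$; since both are surjections onto $\Z_n$ with equal kernel, they differ by an automorphism of $\Z_n$, i.e. there is $\ell \in \Z_n^{\times}$ with $\ell \circ \eta_F \circ \delta = \eta_F$. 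Translating through the one-to-one correspondence between surface kernel maps and generating $\Gamma$-vectors (\cite[Theorem 3]{harvey66}), this says exactly that $(\ell, \delta)$ fixes $\Gamma_F$, i.e. $(\ell, [\delta]) \in \mathrm{Stab}(\Gamma_F)$ and hence $[\delta] \in \pi_2(\mathrm{Stab}(\Gamma_F)) = \map_{\Gamma_F}$. Conversely any $[\delta] \in \map_{\Gamma_F}$ comes with such an $\ell$, so the corresponding orbifold mapping class is liftable. This proves $\lmap_p(S_{g_0,k}) = \map_{\Gamma_F}$ and establishes~(\ref{eqn:neseqn}).

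For the second sequence~(\ref{eqn:ceseqn}), the idea is to track how a normalizing mapping class conjugates $F$ itself. Given $H \in N(F)$, conjugation by $H$ sends $\langle F \rangle$ to itself, hence $HFH^{-1} = F^{\ell}$ for a unique $\ell = \ell(H) \in \Z_n^{\times}$; this $\ell(H)$ is precisely the $\pi_1$-component of the stabilizer element attached to $H$ under the dictionary above, so $H \mapsto \ell(H)$ is a homomorphism $N(F) \to \Z_n^{\times}(\Gamma_F) = \pi_1(\mathrm{Stab}(\Gamma_F))$. It is surjective because every element of $\Z_n^{\times}(\Gamma_F)$ is realized by some $(\ell,\delta) \in \mathrm{Stab}(\Gamma_F)$, which lifts to a symmetric mapping class conjugating $F$ to $F^{\ell}$. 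Its kernel is exactly the set of $H$ commuting with $F$, which is $C(F)$ by definition. This yields~(\ref{eqn:ceseqn}).

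The main obstacle — and the place where Broughton's argument does its real work rather than mere bookkeeping — is the well-definedness and exactness at the middle term of~(\ref{eqn:neseqn}): one must be sure that the lifting criterion ``$[\delta]$ preserves $\pi_1(S_g) \leq \Gamma$'' is insensitive to the choice of representative $\delta$ within its outer class and that the kernel of $N(F) \to \map_{\Gamma_F}$ is genuinely $\langle F \rangle$ and nothing larger; both rely on the Birman--Hilden property (injectivity of the lift-to-quotient passage up to isotopy) together with the fact that $\ker(\eta_F)$ is torsion-free, so that no nontrivial element of $\Gamma$ other than powers of a generator of the distinguished cyclic factors can act trivially. I would also take care with the identification $\map_\Gamma(S_{g_0,k}) \cong \out^+(\Gamma)$ versus $\aut^+(\Gamma)$: the stabilizer is taken in $\Z_n^{\times} \times \aut^+(\Gamma)$ but $\map_{\Gamma_F}$ is its image in $\out^+(\Gamma)$, so one must check that inner automorphisms of $\Gamma$ act trivially on $\Gamma_F$ up to the $\Z_n^{\times}$ factor, which follows since an inner automorphism fixes $\eta_F$ up to post-composition by nothing at all (it fixes the kernel) and hence contributes trivially. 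Everything else is a routine diagram chase assembling~(\ref{eqn:bhseq}), \cite[Theorem 4]{birman73}, and Propositions~\ref{prop:group_action}--\ref{prop:gen_vector}.
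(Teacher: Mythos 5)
The paper does not prove this statement: it is quoted verbatim as Broughton's Theorem 3.2 from \cite{broughton92}, so there is no in-paper argument to compare against. Your reconstruction is correct and follows the natural route: identify $N(F)$ with $\smap_p(S_g)$ via \cite[Theorem 4]{birman73}, feed that into the Birman--Hilden sequence~(\ref{eqn:bhseq}), and then translate the covering-space lifting criterion (``$[\delta]$ preserves $\ker\eta_F$'') through Harvey's bijection between surface kernel maps and generating $\Gamma$-vectors to get $\lmap_p(S_{g_0,k})=\map_{\Gamma_F}$; the second sequence then comes from the conjugation character $H\mapsto \ell(H)$ with $HFH^{-1}=F^{\ell(H)}$. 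Two small points worth tightening if you wrote this out in full: first, with the paper's convention $(\ell,\delta)\cdot\eta_F=\ell\circ\eta_F\circ\delta$, the stabilizer component attached to $H$ is $\ell(H)^{-1}$ rather than $\ell(H)$ itself --- harmless here since $\pi_1(\mathrm{Stab}(\Gamma_F))$ is a subgroup of $\Z_n^{\times}$ and hence closed under inversion, but the identification ``$\ell(H)$ is precisely the $\pi_1$-component'' is off by an inverse; second, your justification that inner automorphisms stabilize $\Gamma_F$ is most cleanly phrased as $\eta_F\circ\mathrm{conj}_\gamma=\eta_F$ because $\Z_n$ is abelian, which is what makes $\pi_2(\mathrm{Stab}(\Gamma_F))$ well defined in $\out^+(\Gamma)$.
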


We conclude this subsection with the following useful remark.
\begin{rem}
\label{rem:iso_norm}
From the exact sequences (\ref{eqn:bhseq}) and (\ref{eqn:neseqn}), it follows that
\begin{equation}
\label{eqn:bh_alt}
\lmap_p(S_{g_0,k}) \cong N(F)/ \langle F \rangle \cong  \map_{\Gamma_F}.
\end{equation}
Hence, we have a description of liftable mapping classes as orientation-preserving outer automorphisms of $\Gamma$ stabilizing a fixed generating $\Gamma$-vector corresponding to $\langle F \rangle$-action on $S_g$ up to an automorphism of $\langle F \rangle$. Let $\clmap_p(S_{g_0,k})$ denote the image of $C(F)$ in $\lmap_p(S_{g_0,k})$. By Theorem \ref{thm:norm_cent} and Equation (\ref{eqn:bh_alt}), we have:
\begin{equation}
\label{eqn:lm/clm}
\begin{split}
\lmap_p(S_{g_0,k})/\clmap_p(S_{g_0,k})&\cong (N(F)/\langle F \rangle)/(C(F)/\langle F \rangle)\\
& \cong N(F)/C(F)\\
& \cong \Z_n^{\times}(\Gamma
_F).
\end{split}
\end{equation}
\noindent Hence, the elements of $\clmap_p(S_{g_0,k})$ are orientation-preserving outer automorphisms of $\Gamma$ stabilizing a fixed generating $\Gamma$-vector corresponding to the $\langle F \rangle$-action on $S_g$ with $\aut(\langle F \rangle)$ acting trivially.
\end{rem}

\section{Generating the liftable mapping class group of spherical cyclic covers}
\label{sec:liftable}
In this section, we derive a generating set for liftable mapping class groups associated with spherical cyclic actions on surfaces. For a periodic mapping class $F\in \map(S_g)$, an $\langle F \rangle$-action on $S_g$ is called \textit{spherical} if $g_0(D_F)=0$. In this case, we say that $F$ is a \textit{spherical mapping class} and $S_g \to \Orb_F$ is a \textit{spherical cover}. Throughout this paper, we will assume that $F\in \map(S_g)$ is a spherical periodic mapping class.

\begin{rem}
Let $F$ be a spherical mapping class with the data set
\[
D_F=(n,0;(d_1,n_1),(d_2,n_2), \dots , (d_k,n_k)).
\]
We observe that the tuple
\[
\left(c_1=\frac{n}{n_1}d_1, c_2=\frac{n}{n_2}d_2, \dots , c_k=\frac{n}{n_k}d_k\right)
\]
is precisely a generating $\Gamma$-vector associated to $F$. This tuple will be denoted by $\Gamma_F$ and called the \textit{generating $\Gamma$-vector} associated with $D_F$.
\end{rem}

For the rest of this section, we fix a spherical mapping class $F\in \map(S_g)$ with associated data set $D_F$, generating $\Gamma$-vector $\Gamma_F$, and spherical cover $p:S_g\to S_{0,k}$. For some fixed $1 \leq j \leq k-1$ such that $|\gamma_j|=|\gamma_{j+1}|$, consider the map $\phi_j:\Gamma \to \Gamma$ defined on generators by:
\[
\phi_j(\gamma_j)=\gamma_{j+1}, \, \phi_j(\gamma_{j+1})=\gamma_{j+1}^{-1}\gamma_j\gamma_{j+1}, \text{ and } \phi_j(\gamma_i) = \gamma_i \text{ when } i \neq j.
\]
Since
\[
\phi_j\left(\prod_{i=1}^k \gamma_i\right)=\prod_{i=1}^k \gamma_i,
\]
by Theorem \ref{thm:aut_gamma}, $\phi_j$ is an orientation-preserving automorphism of $\Gamma$. At the level of generating $\Gamma$-vectors, $\phi_j$ acts by permuting $c_j$ and $c_{j+1}$. The following result (see \cite[Proposition 2.6]{broughton91}) describes the action of $\aut^+(\Gamma)$ on the generating $\Gamma$-vectors.

\begin{prop}
For $\Z_n^{\times}\times \aut^+(\Gamma
)$-action on generating $\Gamma$-vectors, $\aut^+(\Gamma)$ acts on generating $\Gamma$-vectors by arbitrarily permuting the $c_j$'s while preserving their orders. Furthermore, $\Z_n^{\times}$ acts on the generating $\Gamma$-vectors by multiplication.
\end{prop}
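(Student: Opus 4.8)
The plan is to prove the two assertions separately, reducing the claim about $\aut^{+}(\Gamma)$ to the explicit automorphisms $\phi_j$ introduced just above. The statement about $\Z_n^{\times}$ is immediate from the definition of the $\Z_n^{\times}\times\aut^{+}(\Gamma)$-action on surface kernel maps: an element $\ell\in\Z_n^{\times}$ sends $\eta$ to $\ell\circ\eta$, hence sends the generating $\Gamma$-vector $(c_1,\dots,c_k)$ to $(\ell c_1,\dots,\ell c_k)$, and since $\gcd(\ell,n)=1$ this preserves the orders $|c_j|=n_j$. So the work is in the $\aut^{+}(\Gamma)$-factor, where I would show (a) every $\delta\in\aut^{+}(\Gamma)$ acts on $\Gamma_F$ by an order-preserving permutation of the $c_j$'s, and (b) every order-preserving permutation of the $c_j$'s is realized by some such $\delta$.

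For (a), I would invoke Theorem~\ref{thm:aut_gamma}: any $\delta\in\aut^{+}(\Gamma)$ is induced by an automorphism $\phi$ of the free group $\mathbb{F}$ for which $\phi(\gamma_j)$ is conjugate to $\gamma_{\sigma(j)}^{s}$ with $s=\pm1$ and $\sigma\in\Sigma_k$ satisfying $n_j=n_{\sigma(j)}$, and orientation-preservation forces $s=1$. The key observation is that the surface kernel map $\eta:\Gamma\to\Z_n$ has abelian target, so $\eta$ is constant on conjugacy classes of $\Gamma$; hence $\eta(\delta(\gamma_j))=\eta(\gamma_{\sigma(j)})=c_{\sigma(j)}$, and therefore $(1,\delta)\cdot\eta=\eta\circ\delta$ corresponds to the vector $(c_{\sigma(1)},\dots,c_{\sigma(k)})$. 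Since $|c_{\sigma(j)}|=n_{\sigma(j)}=n_j=|c_j|$, this is an order-preserving permutation of $\Gamma_F$, which is (a).

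For (b), I would first note that, by our convention that the $n_j$ are listed in non-decreasing order, the subgroup of $\Sigma_k$ consisting of permutations $\sigma$ with $n_j=n_{\sigma(j)}$ is a direct product of symmetric groups on the blocks of consecutive equal entries, and is therefore generated by the adjacent transpositions $(j\ \ j{+}1)$ with $|\gamma_j|=|\gamma_{j+1}|$. Each such transposition is realized by the automorphism $\phi_j$ constructed immediately before the proposition: $\phi_j$ is orientation-preserving by Theorem~\ref{thm:aut_gamma} because it fixes $\prod_{i=1}^{k}\gamma_i$, and, again because conjugation is invisible to $\eta$, we have $\eta(\phi_j(\gamma_j))=\eta(\gamma_{j+1})=c_{j+1}$ and $\eta(\phi_j(\gamma_{j+1}))=\eta(\gamma_{j+1}^{-1}\gamma_j\gamma_{j+1})=\eta(\gamma_j)=c_j$, so $\phi_j$ interchanges $c_j$ and $c_{j+1}$ in $\Gamma_F$ and fixes the rest. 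Composing the $\phi_j$'s then realizes an arbitrary order-preserving permutation, and together with (a) this finishes the proof.

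The only genuinely delicate point is the reduction used in (a): the conjugation ambiguity in Theorem~\ref{thm:aut_gamma} is exactly what obstructs a naive identification of $\aut^{+}(\Gamma)$ with a permutation group on the nose, and it is the passage to the abelian quotient $\Z_n$ that makes it disappear. Everything else — checking that $\phi_j\in\aut^{+}(\Gamma)$ and that the $\phi_j$'s generate the relevant subgroup of $\Sigma_k$ — is routine and uses only the sorted-signature convention and Theorem~\ref{thm:aut_gamma}.
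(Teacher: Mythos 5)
Your proof is correct and follows the same route the paper takes (the paper cites Broughton's Proposition 2.6 and, in the paragraph immediately preceding the statement, constructs exactly the automorphisms $\phi_j$ you use for the realization direction). Your added observations---that $\eta$ kills the conjugation ambiguity in Theorem~\ref{thm:aut_gamma} because $\Z_n$ is abelian, and that the sorted-signature convention reduces everything to adjacent transpositions---are precisely the details needed to complete the cited argument.
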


The action of $\map(S_{0,k})$ on $H_1(S_{0,k},\Z)$ induces a surjective homomorphism $\psi: \map(S_{0,k})\to \Sigma_k$ with kernel, denoted by $\pmap(S_{0,k})$, known as the \textit{pure mapping class group} of $S_{0,k}$. Therefore, we have the following short exact sequence
\begin{equation}
1\rightarrow \mathrm{PMod}(S_{0,k}) \rightarrow \map(S_{0,k}) \xrightarrow{\psi} \Sigma_k \rightarrow 1.
\end{equation}
For $i=1,2,\dots k-1$, let $\sigma_i\in \map(S_{0,k})$ be the half-twist such that $\psi(\sigma_i)=(i,i+1)$. For $1\leq i<j<k$, let $a_{ij}=(\sigma_{j-1}\sigma_{j-2}\cdots \sigma_{i+1})\sigma_i^2(\sigma_{j-1}\sigma_{j-2}\cdots \sigma_{i+1})^{-1}$. It is known that \cite{ghaswala17,birman74} that
\[
\map(S_{0,k})=\langle \sigma_i \mid 1\leq i<k \rangle
\text{~and~}
\pmap(S_{0,k})= \langle a_{ij} \mid 1\leq i <j <k \rangle.
\]

For simplicity, we will also denote the restriction of $\psi$ onto $\lmap_p(S_{0,k})$ and $\clmap(S_{0,k})$ by $\psi$. Since, elements of $\pmap(S_{0,k})$, by definition, acts trivially on $H_1(S_{0,k},\Z)$, it follows that $\pmap(S_{0,k})$ is a subgroup of $\lmap_p(S_{0,k})$. Since $\pmap(S_{0,k})$ fixes each branch point, it follows that $\pi_1 \circ \pi_2^{-1}(\mathrm{PMod}(S_{0,k}))$ is trivial. Therefore, by Remark \ref{rem:iso_norm}, $\pmap(S_{0,k})$ is also a subgroup of $ \clmap_p(S_{0,k}).$ Hence, we have the following short exact sequences,

\begin{equation}
\label{eqn:lmod}
1 \rightarrow \pmap(S_{0,k}) \rightarrow \lmap_p(S_{0,k}) \xrightarrow{\psi} H_1 \rightarrow 1
\end{equation}
and
\begin{equation}
\label{eqn:clmod}
1 \rightarrow \pmap(S_{0,k}) \rightarrow \clmap_p(S_{0,k}) \xrightarrow{\psi} H_2 \rightarrow 1,
\end{equation}
where $H_1$ and $H_2$ are subgroups of $\Sigma_k$. Throughout this paper, we will denote the images of $\lmap_p(S_{0,k})$ and $\clmap_p(S_{0,k})$ under $\psi$ by $H_1$ and $H_2$, respectively. By Equation (\ref{eqn:lm/clm}), we have the following short exact sequence

\begin{equation}
\label{eqn:lm/clm_se}
1 \to \clmap_p(S_{0,k}) \to \lmap_p(S_{0,k})\xrightarrow{q} \Z_n^{\times}(\Gamma_F) \to 1.
\end{equation}

Let $B$ be the collection of all half-twists $\tau_{i,j}\in \map(S_{0,k})$ that permute the branch points corresponding to $c_i$ and $c_j$ in $\Gamma_F$ such that $n_i = n_j$ and $d_i = d_j$, where $i<j$. Since $(1,\tau_{i,j})\cdot \Gamma_F = \Gamma_F$, we have $\tau_{i,j} \in \clmap_p(S_{0,k})$. Now, we describe a generating set for $\lmap_p(S_{0,k})$.

\begin{theorem}
\label{thm:main}
For $A=\{a_{ij}\mid 1\leq i<j<k\}$ such that $\langle A \rangle=\pmap(S_{0,k})$ and $B$ as above, we have $\clmap_p(S_{0,k}) = \langle A \cup B \rangle$.
\end{theorem}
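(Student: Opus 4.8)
The plan is to prove the reverse inclusion $\clmap_p(S_{0,k}) \subseteq \langle A \cup B\rangle$, since the inclusion $\langle A\cup B\rangle \subseteq \clmap_p(S_{0,k})$ is already established in the discussion preceding the statement (each $a_{ij}$ lies in $\pmap(S_{0,k}) \subseteq \clmap_p(S_{0,k})$, and each $\tau_{i,j} \in B$ satisfies $(1,\tau_{i,j})\cdot\Gamma_F = \Gamma_F$, hence lies in $\clmap_p(S_{0,k})$). To prove containment I would work with the short exact sequence~(\ref{eqn:clmod}), namely $1 \to \pmap(S_{0,k}) \to \clmap_p(S_{0,k}) \xrightarrow{\psi} H_2 \to 1$. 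Since $\langle A\rangle = \pmap(S_{0,k})$ by hypothesis, it suffices to show that the image of $\langle B\rangle$ under $\psi$ is all of $H_2$: a standard diagram-chase then gives that $\langle A\cup B\rangle$ surjects onto $H_2$ with kernel containing $\pmap(S_{0,k})$, forcing $\langle A\cup B\rangle = \clmap_p(S_{0,k})$.

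So the heart of the argument is the identification $\psi(\langle B\rangle) = H_2$. First I would pin down $H_2$ concretely: by Remark~\ref{rem:iso_norm} (the last sentence) and the isomorphism $\clmap_p(S_{0,k}) \cong$ (outer automorphisms of $\Gamma$ fixing $\Gamma_F$ with $\aut(\langle F\rangle)$ acting trivially), combined with Theorem~\ref{thm:aut_gamma} and the preceding Proposition describing the $\aut^+(\Gamma)$-action as "arbitrary permutation of the $c_j$'s preserving orders," an element of $\clmap_p(S_{0,k})$ is (up to $\pmap$) a permutation $\sigma\in\Sigma_k$ such that permuting the entries of $\Gamma_F$ by $\sigma$ returns $\Gamma_F$ \emph{exactly} (no $\ell\in\Z_n^\times$ twist, since $\aut(\langle F\rangle)$ acts trivially in the centralizer case). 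That is, $H_2 = \{\sigma \in \Sigma_k : c_{\sigma(i)} = c_i \text{ for all } i\}$, which is precisely the Young subgroup $\prod_v \Sigma_{k_v}$ where the product runs over the distinct values $v$ appearing among the $c_i$ and $k_v$ counts how many $c_i$ equal $v$. Note $c_i = c_j$ is equivalent to ($n_i = n_j$ and $d_i = d_j$), which is exactly the defining condition for a half-twist to belong to $B$.

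It then remains to observe that $\psi(B)$ contains all adjacent transpositions within each block: for indices $i,j$ with $c_i = c_j$, one has $\tau_{i,j}\in B$ with $\psi(\tau_{i,j})$ the transposition swapping the $i$-th and $j$-th branch points. These transpositions (as $i,j$ range over each block) generate the Young subgroup $H_2$, so $\psi(\langle B\rangle) \supseteq H_2$; the reverse containment $\psi(\langle B\rangle)\subseteq H_2$ follows from $B \subseteq \clmap_p(S_{0,k})$ already noted. I would be slightly careful here that the half-twists available in $B$ need not be between \emph{adjacent} branch points in the chosen ordering, but conjugating a half-twist between adjacent points by a pure mapping class (an element of $\langle A\rangle$) produces the half-twist between any two points in the same block, so modulo $\pmap$ this causes no difficulty — and in any case we only need $\psi(\langle B\rangle) = H_2$, which holds as long as $B$ contains \emph{some} half-twist realizing each needed transposition.

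The main obstacle I anticipate is the bookkeeping identifying $H_2$ precisely with the Young subgroup determined by equality of the $(d_i,n_i)$ pairs — in particular making sure that the "trivial action of $\aut(\langle F\rangle)$" clause in Remark~\ref{rem:iso_norm} correctly rules out permutations that fix $\Gamma_F$ only up to multiplication by some $\ell \in \Z_n^\times$, and matching the combinatorial condition $c_i = c_j \iff n_i = n_j \wedge d_i = d_j$ against the literal definition of $B$. Once $H_2$ is correctly identified, the surjectivity of $\psi|_{\langle B\rangle}$ and the final diagram chase through~(\ref{eqn:clmod}) are routine.
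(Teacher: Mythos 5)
Your proposal is correct and follows essentially the same route as the paper: both arguments reduce to the short exact sequence $1 \to \pmap(S_{0,k}) \to \clmap_p(S_{0,k}) \xrightarrow{\psi} H_2 \to 1$ and then show $\psi(B)$ generates $H_2$ by observing that any $\sigma \in H_2$ must preserve $\Gamma_F$ exactly, hence only permutes indices with equal $(d_i,n_i)$, so each of its cycles factors into transpositions realized by half-twists in $B$. Your explicit identification of $H_2$ with the Young subgroup determined by the blocks of equal $c_i$ is just a repackaging of the paper's cycle-by-cycle argument, and your worry about non-adjacent half-twists is moot since $B$ by definition contains a half-twist for every qualifying pair $i<j$.
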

\begin{proof}
First, we show that $A \cup B$ generates $\clmap_p(S_{0,k})$. Given the short exact sequence (\ref{eqn:clmod}), it is enough to show that $\psi(B)$ generates $H_2$. For $\sigma \in H_2$, there exist a $\delta \in \clmap_p(S_{0,k})$ such that $\psi(\delta)=\sigma$. As $\sigma$ admits a disjoint cycle decomposition, without loss of generality, we may assume that $\sigma = (1,2,\dots,s)$ for some $s\leq k$. Since $\delta \in \mathrm{CLMod}_p(S_{0,k})$, we have $(1,\delta)\cdot \Gamma_F=\Gamma_F$. Thus $c_1= c_j$ for every $1\leq j\leq s$. It follows that $n_1=n_j$ and $d_1 = d_j$ for all $1\leq j \leq s$. Since $(1,2,\dots,s)=(1,s)(1,s-1)\cdots (1,2)$, $\sigma$ is a product of the $\psi(\tau_{1,j})$ for $2\leq j \leq s$. Hence, $A\cup B$ generates $\clmap_p(S_{0,k})$.
\end{proof}

\noindent In the following remark, we describe an explicit construction of a finite set $C \subset \lmap_p(S_{0,k})$ such that $q(C)=\Z_n^{\times}(\Gamma_F)$. 

\begin{rem}
\label{rem:main}
For $\ell \in \Z_n^{\times}(\Gamma_F)$ such that $\ell \neq 1$, since $q$ is surjective, there exists a $\delta \in \lmap_p(S_{0,k})$ such that $q(\delta) = \ell$. Without loss of generality, for $s\leq k$, let $(1,2,\dots, s)$ be one of the disjoint cycles in the cycle decomposition of $\psi(\delta)$. Since $(\ell,\delta) \cdot \Gamma_F = \Gamma_F$, we have
\[
(\ell c_{s},\ell c_1, \dots ,\ell c_{s-1} ) = (c_1, c_2, \dots , c_{s}).
\]
Thus, we have $\ell^s \equiv 1 \pmod{n_1}$ and for $2 \leq i \leq s$, we have $c_i = \ell c_{i-1}$ and $c_1 = \ell c_s $. Hence,
\begin{equation}
\label{eqn:st_genvec}
(c_1, c_2, \dots , c_s) = (c_1, \ell c_1 , \dots , \ell^{s-1} c_1 ).
\end{equation}
For $1<j\leq s$, we observe that $(1,2,\dots,s)=(1,s)(1,s-1)\cdots (1,2)$ and
\begin{equation}
\label{eqn:arb_half_twist}
(1,j)=\psi(\sigma_1\sigma
_2\cdots \sigma_{j-2}\sigma_{j-1}\sigma_{j-2}\cdots \sigma_2 \sigma_1 ).
\end{equation}
By Equation (\ref{eqn:st_genvec}), it follows that $\Gamma_F$ is a disjoint union of tuples of the form $(c, \ell c , \dots , \ell^{s-1} c)$, where $c\in \Z_n^{\times}$. Hence, by Equation (\ref{eqn:arb_half_twist}), there exists a $\delta_{\ell} \in \map(S_{0,k})$ such that $\psi(\delta_{\ell})=\psi(\delta)$. Since $\psi(\delta_{\ell})=\psi(\delta)$, we have $(\ell,\delta_{\ell})\cdot \Gamma_F =\Gamma_F$, and therefore, $\delta_{\ell}\in \lmap_p(S_{0,k})$ such that $q(\delta_{\ell})=\ell$. For $C=\{\delta_{\ell}\in \lmap_p(S_{0,k})\mid \ell \in \Z_n^{\times}(\Gamma_F)\}$, by (\ref{eqn:lm/clm_se}), it follows that $\psi(B\cup C)$ generates $H_1$. 
\end{rem}

\noindent The following corollary is a direct consequence of Theorem \ref{thm:main}, Remark \ref{rem:main}, and Equation (\ref{eqn:lmod}).

\begin{cor}
\label{cor:main}
For $A$, $B$ defined as above, and  $C=\{\delta_{\ell}\in \lmap_p(S_{0,k})\mid \ell \in \Z_n^{\times}(\Gamma_F)\}$, we have $\lmap_p(S_{0,k}) = \langle A \cup B \cup C \rangle$.
\end{cor}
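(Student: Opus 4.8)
\textbf{Proof proposal for Corollary~\ref{cor:main}.}

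The plan is to deduce the result by combining the exact sequence (\ref{eqn:lmod}) with Theorem~\ref{thm:main} and the construction in Remark~\ref{rem:main}. First I would recall that (\ref{eqn:lmod}) expresses $\lmap_p(S_{0,k})$ as an extension of a subgroup $H_1 \leq \Sigma_k$ by $\pmap(S_{0,k})$, with quotient map $\psi$. By the standard fact about generating an extension, it suffices to exhibit a generating set of $\pmap(S_{0,k})$ together with a collection of elements of $\lmap_p(S_{0,k})$ whose $\psi$-images generate $H_1$. The first ingredient is already in hand: $A = \{a_{ij} \mid 1 \leq i < j < k\}$ generates $\pmap(S_{0,k})$, which by the discussion following (\ref{eqn:clmod}) sits inside $\lmap_p(S_{0,k})$ (indeed inside $\clmap_p(S_{0,k})$, since $\pmap(S_{0,k})$ acts trivially on homology and fixes every branch point).

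Next I would treat the quotient $H_1$. By the short exact sequence (\ref{eqn:lm/clm_se}), $H_1$ fits into an extension with kernel $\psi(\clmap_p(S_{0,k})) = H_2$ and quotient $\Z_n^{\times}(\Gamma_F)$; more precisely, $q$ descends to a surjection $H_1 \to \Z_n^{\times}(\Gamma_F)$ with kernel $H_2$. So again it is enough to generate $H_1$ by elements lying over a generating set of $H_2$ together with elements lying over a generating set of $\Z_n^{\times}(\Gamma_F)$. For the former, Theorem~\ref{thm:main} shows $\psi(B)$ generates $H_2$, and $B \subset \clmap_p(S_{0,k}) \subset \lmap_p(S_{0,k})$ by construction. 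For the latter, Remark~\ref{rem:main} produces, for each $\ell \in \Z_n^{\times}(\Gamma_F)$, an element $\delta_\ell \in \lmap_p(S_{0,k})$ with $q(\delta_\ell) = \ell$; collecting these as $C = \{\delta_\ell \mid \ell \in \Z_n^{\times}(\Gamma_F)\}$, the set $q(C)$ is all of $\Z_n^{\times}(\Gamma_F)$, hence certainly generates it. Assembling the two extensions: $\langle A \rangle = \pmap(S_{0,k})$ is the kernel of $\psi$ on $\lmap_p(S_{0,k})$, and $\psi(B \cup C)$ generates $H_1$ (as noted at the end of Remark~\ref{rem:main}, since $\psi(B)$ generates the kernel $H_2$ and $q(C)$ generates the quotient $\Z_n^{\times}(\Gamma_F)$). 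Therefore $A \cup B \cup C$ generates $\lmap_p(S_{0,k})$.

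I do not expect a serious obstacle here: the corollary is essentially a bookkeeping assembly of results already established, namely the two short exact sequences (\ref{eqn:lmod}) and (\ref{eqn:lm/clm_se}), Theorem~\ref{thm:main}, and the explicit construction of $C$ in Remark~\ref{rem:main}. The only point requiring any care is to apply the extension-generation lemma twice in the correct order — first peeling off $\pmap(S_{0,k})$ to reduce to generating $H_1$, then peeling off $H_2$ within $H_1$ to reduce to generating $\Z_n^{\times}(\Gamma_F)$ — and to keep track that all of $A$, $B$, $C$ genuinely lie in $\lmap_p(S_{0,k})$. Since the excerpt already verifies each of these membership and generation claims, the proof is a short three-line deduction.
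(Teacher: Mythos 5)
Your proposal is correct and follows exactly the route the paper intends: the paper gives no explicit proof, declaring the corollary a direct consequence of Theorem~\ref{thm:main}, Remark~\ref{rem:main}, and the sequence (\ref{eqn:lmod}), and your two-step extension argument (peeling off $\pmap(S_{0,k})$ via $\psi$, then generating $H_1$ from $\psi(B)$ and $q(C)$ via (\ref{eqn:lm/clm_se})) is precisely the bookkeeping the authors leave implicit, including the final sentence of Remark~\ref{rem:main} asserting that $\psi(B\cup C)$ generates $H_1$.
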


\section{Applications}
\label{sec:app}
In this section, we derive several applications of Theorem \ref{thm:main}.
\subsection{An algorithm for deriving a presentation for the normalizer and the centralizer of a spherical mapping class}
We first recall a standard method to find a presentation for a group extension. Consider a short exact sequence of groups
\[
1 \rightarrow N \xrightarrow{i} G \xrightarrow{j} Q \rightarrow 1.
\]
Let $N = \langle S_1 \mid R_1 \rangle$ and $Q = \langle S_2 \mid R_2 \rangle$ be a presentation such that for $i=1,2$, $S_i$ and $R_i$, are the set of generators and relators for the corresponding groups, respectively. For a set $S$, the free group on $S$ will be denoted by $F(S)$. For each $s \in S_2$, let $s' \in G$ be such that $j(s') = s$. Define
\[
S_{Q} := \{ s' \in G \mid j(s') = s \in S_2\} \text{ and } S_N := \{ i(a) \mid a \in S_1\}.
\]
While defining $S_Q$, we have assumed that $j$ maps $S_Q$ bijectively to $S_2$.

Any $r \in R_2$ can be written as $r = s_1^{\epsilon_1}s_2^{\epsilon_2} \dots s_k^{\epsilon_k}$, where $s_i \in S_2$ and $\epsilon_i = \pm 1$ for $1 \leq i \leq k$. For $r' = s_1'^{\epsilon_1}s_2'^{\epsilon_2} \dots s_k'^{\epsilon_k} \in G$, we have $j(r') = r.$ Since $r = 1 \in Q,$ we have $r' \in \ker j = \mathrm{Im}~i$. Thus, there exists $a_{r'}\in F(i(S_1))$ such that $i(a_{r'}) = r'$. Define
\[
R_Q := \{ i(a_{r'})=r' \mid j(r') = r \in R_2\}.
\]
Since $i(N)$ is normal in $G$, for every $s \in S_2$ and $a \in S_1$, we have $s'i(a)s'^{-1} \in i(N)$. Let $s_a \in F(i(S_1))$ such that $s'i(a)s'^{-1} = s_a$. Define
\[
R_{NQ} = \{s'i(a)s'^{-1}=s_a \mid a \in S_1, ~s \in S_2\}
\]
and
\[
R_N = \{i(a_1)^{\epsilon_1}i(a_2)^{\epsilon_2} \cdots i(a_k)^{\epsilon_k}=1 \mid a_i \in S_1,~ a_1^{\epsilon_1}a_2^{\epsilon_2} \dots a_k^{\epsilon_k} \in R_1, ~\epsilon_i = \pm 1 \}.
\]

The following lemma is a well-known result in combinatorial group theory (see \cite[Proposition 2.55]{cgt_holt}).
\begin{lem}
\label{lem:presen}
For a short exact sequence of groups
\[
1 \rightarrow N \xrightarrow{i} G \xrightarrow{j} Q \rightarrow 1
\]
and given presentations $N = \langle S_1 \mid R_1 \rangle$, $Q= \langle S_2 \mid R_2 \rangle$, a presentation for $G$ is given by
\[
\langle S_N \cup S_Q \mid R_N \cup R_Q \cup R_{NQ} \rangle,
\]
where $S_N,~S_Q,~R_N,~R_Q$, and $R_{NQ}$ are defined above.
\end{lem}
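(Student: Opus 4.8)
The plan is to recognise this as the classical ``presentation of a group extension'' and prove it by building the tautological comparison homomorphism and then running a five-lemma argument. Write $H=\langle S_N\cup S_Q\mid R_N\cup R_Q\cup R_{NQ}\rangle$ for the group defined by the proposed presentation. First I would check that sending each generator of $H$ to the element of $G$ it was extracted from (namely $i(a)$ for $a\in S_1$, and the chosen lift $s'$ for $s\in S_2$) respects every defining relator: the relators in $R_N$ hold because the relators $R_1$ hold in $N$ and $i$ is a homomorphism, those in $R_Q$ hold by the very definition of the words $a_{r'}$, and those in $R_{NQ}$ hold by the very definition of the words $s_a$. This produces a homomorphism $\theta\colon H\to G$, which is surjective because $i(S_1)$ generates $i(N)$, $j$ carries $\{s':s\in S_2\}$ onto the generating set $S_2$ of $Q$, and $i(N)$ together with any set of lifts of generators of $Q$ generates $G$. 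Everything then reduces to proving $\theta$ injective.

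For injectivity I would set $\bar N:=\langle S_N\rangle\le H$ and establish three facts. (a) $\bar N\trianglelefteq H$: the relations in $R_{NQ}$ say exactly that conjugation by each $s'$ is a homomorphism carrying every generator of $\bar N$ into $\bar N$, hence carries all of $\bar N$ into $\bar N$, so $s'\bar N(s')^{-1}\subseteq\bar N$; a short complementary argument (solving the same relations for $i(a)$ in terms of $(s')^{-1}$-conjugates) gives $(s')^{-1}\bar N s'\subseteq\bar N$ too, and then every generator of $H$ normalises $\bar N$. (b) $\bar N\cong N$: the rule $a\mapsto i(a)$ defines a homomorphism $N\to\bar N$ because the relators $R_1$ map into $R_N$; it is onto by construction of $\bar N$, and injective because composing it with $\theta$ recovers the injection $i$. (c) $H/\bar N\cong Q$: modulo $\bar N$ each relator in $R_Q$ becomes the corresponding relator of $Q$ (its $F(i(S_1))$-part dies), giving a surjection $Q\to H/\bar N$; conversely $\theta$ sends $\bar N$ onto the normal subgroup $i(N)$ and hence induces $\bar\theta\colon H/\bar N\to G/i(N)\cong Q$, and since the composite $Q\to H/\bar N\xrightarrow{\bar\theta}Q$ is the identity on generators, both maps are isomorphisms.

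Finally I would place these in the commutative ladder with exact rows $1\to\bar N\to H\to H/\bar N\to 1$ and $1\to N\xrightarrow{i}G\xrightarrow{j}Q\to 1$, with middle vertical map $\theta$ and outer vertical maps the isomorphisms supplied by (b) and (c); the five lemma then forces $\theta$ to be an isomorphism, which is the assertion. Equivalently, one may bypass the five lemma via the ``normal form'' argument: using (a), every element of $H$ can be written as an element of $\bar N$ times a word in $S_Q$, and this expression is unique up to the identifications $\bar N\cong N$ and $H/\bar N\cong Q$.

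The step I expect to be the main obstacle is (a), the normality of $\bar N$ — and within it, the point that needs genuine care is checking that conjugation by the inverses $(s')^{-1}$, not merely by the $s'$ appearing in $R_{NQ}$, still preserves $\bar N$. Once normality is secured, the remaining verifications are routine bookkeeping with the two given presentations $N=\langle S_1\mid R_1\rangle$ and $Q=\langle S_2\mid R_2\rangle$ and the definitions of $S_N,S_Q,R_N,R_Q,R_{NQ}$.
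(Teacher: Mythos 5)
The paper offers no proof of this lemma at all --- it is stated as a well-known fact and attributed to \cite[Proposition 2.55]{cgt_holt} --- so there is no in-paper argument to compare against. On its own merits, your proposal is the standard textbook proof: von Dyck's theorem to obtain the comparison map $\theta\colon H\to G$, surjectivity of $\theta$ from the fact that $i(N)$ together with any set of lifts of generators of $Q$ generates $G$, and injectivity via the short five lemma applied to the ladder over $1\to N\to G\to Q\to 1$. The architecture is correct and complete except at one point.

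That point is exactly the one you flag: normality of $\bar N=\langle S_N\rangle$ in $H$. The relations $R_{NQ}$ give $s'\bar N(s')^{-1}\subseteq\bar N$, i.e., conjugation by $s'$ restricts to an injective endomorphism $\phi_s$ of $\bar N$; but your suggested fix of ``solving the same relations for $i(a)$ in terms of $(s')^{-1}$-conjugates'' does not work as stated, because from $s'i(a)(s')^{-1}=s_a$ one can only express $(s')^{-1}s_a s'$ in $\bar N$, and there is no a priori reason that every generator $i(a)$ lies in the image of $\phi_s$. The clean repair is to reverse the order of your steps (a) and (b). Step (b) --- that $a\mapsto i(a)$ induces an isomorphism $N\to\bar N$ whose composite with $\theta$ is $i$ --- uses nothing about normality, and it shows that $\theta|_{\bar N}\colon\bar N\to i(N)$ is an isomorphism. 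Now $\theta\circ\phi_s=c_{\theta(s')}\circ\theta|_{\bar N}$, where $c_{\theta(s')}$ is conjugation by $\theta(s')$ in $G$; since $i(N)\trianglelefteq G$, this is an isomorphism from $\bar N$ onto $i(N)$, whence $\phi_s=(\theta|_{\bar N})^{-1}\circ(c_{\theta(s')}\circ\theta|_{\bar N})$ is an automorphism of $\bar N$. In particular $\phi_s$ is surjective, so $(s')^{-1}\bar N s'=\phi_s^{-1}(\bar N)=\bar N$, and every generator of $H$ normalises $\bar N$. With normality secured this way, your steps (c) and the five-lemma conclusion go through verbatim.
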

\noindent The elements of $S_N$ and $S_Q$ will be called \textit{kernel generators} and \textit{lifted generators}, respectively. The equations of the sets $R_N$, $R_Q$, and $R_{NQ}$ will be called \textit{kernel relations}, \textit{lifted relations}, and \textit{conjugacy relations}, respectively.

We will now describe well-known \cite{birman74,ghaswala17} presentations for $\map(S_{0,k})$ and $\pmap(S_{0,k})$.

\begin{lem}
\label{lem:genset_mod0}
\begin{enumerate}[(a)]
\item The group $\map(S_{0,k})$ is generated by $\sigma_i$ for $1\leq i<k$ and has the following relations.
\begin{enumerate}[(i)]
\item $[\sigma_i,\sigma_j]=1$ for $|i-j|>1$.
\item $\sigma_i\sigma_{i+1}\sigma_i=\sigma_{i+1}\sigma_i\sigma_{i+1}$.
\item $(\sigma_1\sigma_2\cdots\sigma_{k-1})^k=1$.
\item $\sigma_1\sigma_2\cdots \sigma_{k-1}\sigma_{k-1}\cdots \sigma_2\sigma_1=1$.
\end{enumerate}
\item The group $\pmap(S_{0,k})$ is generated by $a_{ij}$ for $1\leq i<j<k$ and has the following relations.
\begin{enumerate}[(i)]
\item $[a_{pq},a_{rs}]=1$, where $p<q<r<s$.
\item $[a_{ps},a_{qr}]=1$, where $p<q<r<s$.
\item $[a_{rs}a_{pr}a_{rs}^{-1},a_{qs}]=1$, where $p<q<r<s$.
\item $a_{pr}a_{qr}a_{pq}=a_{qr}a_{pq}a_{pr}=a_{pq}a_{pr}a_{qr}$ for $p<q<r$.
\item $(a_{12}a_{13}\cdots a_{1(k-1)})\cdots (a_{(k-3)(k-2)}a_{(k-3)(k-1)})(a_{(k-2)(k-1)})=1$.
\end{enumerate}
\end{enumerate}
\end{lem}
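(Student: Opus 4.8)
The two presentations are classical; the plan is to recall how each is obtained by adjoining the appropriate \emph{sphere relations} to the standard presentation of the braid group, respectively the pure braid group, on $k$ strands. Indeed, the presentation in part (a) is (a mild reformulation of) the presentation of $\map(S_{0,k})$ recorded in \cite{birman74}, and part (b) is the presentation of $\pmap(S_{0,k})$ used in \cite{ghaswala17}; below I sketch the derivations.

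For part (a), recall that $B_k = \langle \sigma_1, \dots, \sigma_{k-1} \mid (\text{i}), (\text{ii}) \rangle$ is the mapping class group of a disc with $k$ marked points, and that capping off the boundary with a disc induces a surjection $B_k \to \map(S_{0,k})$. This map factors through the spherical braid group $B_k(S^2)$, which has the classical presentation $B_k(S^2) = \langle \sigma_1, \dots, \sigma_{k-1} \mid (\text{i}), (\text{ii}), (\text{iv}) \rangle$, where $(\text{iv})$ is the single sphere relation expressing that a strand may be dragged once around the back of the sphere. For $k \geq 3$ the centre of $B_k(S^2)$ is cyclic of order two, generated by the image of the full twist $(\sigma_1 \cdots \sigma_{k-1})^k$, and $\map(S_{0,k}) \cong B_k(S^2) / Z(B_k(S^2))$; adjoining the relation that kills the full twist yields $(\text{iii})$. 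Hence $(\text{i})$--$(\text{iv})$ is a complete set of relations for $\map(S_{0,k})$.

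For part (b) one restricts to the pure subgroups. The pure braid group $P_k$ carries Artin's presentation with generators $a_{ij}$ ($1 \leq i < j \leq k$) and relations $(\text{i})$--$(\text{iv})$ of the lemma together with their counterparts involving the $a_{ik}$. Passing to the sphere, the sphere relation allows one to eliminate the generators $a_{ik}$ in favour of the $a_{ij}$ with $j \leq k-1$, and the single surviving relation is $(\text{v})$: it states that the full twist on the first $k-1$ marked points, that is, the Dehn twist about a curve enclosing them, is trivial, which on the sphere in turn forces the full twist on all $k$ marked points to vanish, so that the centre is automatically quotiented out. Alternatively, (b) may be deduced from (a) by a Reidemeister--Schreier rewriting along the finite-index subgroup $\pmap(S_{0,k}) \leq \map(S_{0,k})$, although the braid-theoretic route is shorter. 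The one point that genuinely requires care, in either approach, is to determine exactly which sphere relations must be adjoined and to verify that, together with the braid (respectively pure braid) relations, they are sufficient; this is precisely the content of the classical computations of $\ker(B_k \to B_k(S^2))$ and of the centre of $B_k(S^2)$, which we take as known.
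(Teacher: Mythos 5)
Your sketch is correct: the paper gives no proof of this lemma, simply citing it as well-known from \cite{birman74} and \cite{ghaswala17}, and your derivation (Fadell--Van Buskirk presentation of the spherical braid group plus killing the order-two full twist in the centre for part (a); Artin's presentation of the pure braid group with the $a_{ik}$ eliminated and the boundary/full-twist relation (v) for part (b)) is exactly the standard route taken in those references. So your proposal is consistent with, and merely more explicit than, the paper's treatment.
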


Now, we provide the following algorithm for deriving presentations for $\lmap_p(S_{0,k})$, $C(F)$, and $N(F)$.

\begin{algo}
\label{algo:main}
Let $F \in \map(S_g)$ be a spherical mapping class with given cyclic data set
\[
D_F=(n,0;(d_1,n_1),(d_2,n_2), \dots, (d_k,n_k)).
\]
\begin{enumerate}[Step 1:]
\item From the given data set $D_F$, construct the set $B$ of all half-twists $\tau_{i,j}$ such that $\psi(\tau_{i,j})=(i,j)$, $n_i = n_j$, and $d_i = d_j$, where $i<j$.
\item By using Remark \ref{rem:main}, construct a finite set $C \subset \lmap_p(S_{0,k})$ such that $q(C)$ generates $\Z_n^{\times}(\Gamma_F)$.
\item For $A=\{a_{i,j} \mid 1\leq i <j <k \}$, $A \cup B \cup C$ (resp. $A \cup B$) is a generating set for $\lmap_p(S_{0,k})$ (resp. $\clmap_p(S_{0,k})$) (follows by Theorem \ref{thm:main} and Corollary \ref{cor:main}).
\item Find a presentation for $\psi(\lmap_p(S_{0,k}))$ and $\psi(\clmap_p(S_{0,k}))$ in generating set $\psi(B \cup C)$ and $\psi(B)$, respectively (determination of such presentations will depend on $D_F$).
\label{step:pres_H1H2}
\item Use Lemmas \ref{lem:presen} - \ref{lem:genset_mod0} and the presentations obtained in Step \ref{step:pres_H1H2} to obtain a presentation for $\lmap_p(S_{0,k})$ and $\clmap_p(S_{0,k})$.
\label{step:presn}
\item Construct the sets $\tilde{A}$, $\tilde{B}$, and $\tilde{C}$ consisting of one lift for each element of $A$, $B$, and $C$, respectively. (For determining these lifts, the theory developed in \cite{rajeevsarathy19,rajeevsarathy213} can be used).
\item Use Lemma \ref{lem:presen} to obtain presentations of $N(F)$ (resp. $C(F)$) by lifting the presentations of $\lmap_p(S_{0,k})$ (resp. $\clmap_p(S_{0,k})$) obtained in Step \ref{step:presn} in the generating set $\{F\}\cup \tilde{A} \cup \tilde{B}\cup \tilde{C}$ (resp. $\{F\}\cup \tilde{A} \cup \tilde{B}$).
\end{enumerate}
\end{algo}

\subsection{Normalizers and centralizers of irreducible periodic mapping classes} 
A periodic mapping class $F\in \map(S_g)$ is called \textit{irreducible} if $\Gamma (\Orb_F)=(0,n_1,n_2,n_3)$, that is, $\Orb_F\approx S_{0,3}$. Thus, irreducible periodic mapping classes are spherical. Since $\Orb_F\approx S_{0,3}$, we have $\map(S_{0,3})\cong \Sigma_3$. Therefore, in this case, we identify $\map(S_{0,3})$ with $\Sigma_3$. The group with a presentation of the form
\[
\langle x,y\mid x^m=1,~y^n=1,~xyx^{-1}=y^{\ell}\rangle,
\]
where $\ell \in \Z_n^{\times}$ with $\ell\neq 1$, will be denoted by $\Z_n\rtimes_{\ell} \Z_m$. The following result describes normalizers and centralizers of irreducible periodic mapping classes. We note that the result about the centralizer has also been obtained in earlier work (see \cite[Proposition 4.9]{rajeevsarathy22}).

\begin{prop}
\label{prop:norm_irr}
For $g\geq 2$, let $F\in \map(S_g)$ be an irreducible periodic mapping class of order $n$ with corresponding generating $\Gamma$-vector $\Gamma_F=(c_1,c_2,c_3)$ and signature $(0;n_1,n_2,n_3)$. Then the following statements hold.
\begin{enumerate}[(i)]
\item If there exists an $\ell\in \Z_n^{\times}$ such that $\ell^3=1$ and $(c_1,c_2,c_3)=(c_1,\ell c_1, \ell^2 c_1)$, then $\ell\neq 1$, $\lmap_p(S_{0,3})\cong \Z_3$, $C(F)=\langle F\rangle$, and $N(F)\cong \Z_n \rtimes_{\ell}\Z_3$.
\item If there exists an $\ell \in \Z_n^{\times}$ such that $\ell^2=1$, $\ell\equiv 1 \pmod{n_1}$, and $(c_1,c_2,c_3)=(c_1,c_2,\ell c_2)$, then $\lmap_p(S_{0,3})\cong \Z_2$. Furthermore:
\begin{enumerate}[(a)]
\item If $\ell=1$, then $n\leq 2g+2$ and $C(F)=N(F)\cong \Z_n \times \Z_2$.
\item If $\ell \neq 1$, then $C(F)=\langle F \rangle$ and $N(F)\cong \Z_n \rtimes_{\ell} \Z_2$
\end{enumerate}
\item Otherwise, $\lmap_p(S_{0,3})=1$ and $C(F)=N(F)=\langle F \rangle$.
\end{enumerate}
\end{prop}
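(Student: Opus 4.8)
The plan is to work entirely through the identification $\lmap_p(S_{0,3})\cong \map_{\Gamma_F}$ from Remark~\ref{rem:iso_norm}, together with Theorem~\ref{thm:norm_cent}, so that everything reduces to computing the stabilizer $\mathrm{Stab}(\Gamma_F)$ of the generating $\Gamma$-vector $\Gamma_F=(c_1,c_2,c_3)$ under the $\Z_n^{\times}\times\aut^+(\Gamma)$-action. Since $g_0=0$ and $k=3$, the group $\aut^+(\Gamma)$ acts on $\Gamma$-vectors by arbitrary order-preserving permutations of the three entries, and $\map(S_{0,3})\cong\Sigma_3$, so $\map_{\Gamma_F}$ is a subgroup of $\Sigma_3$ and $\Z_n^{\times}(\Gamma_F)\leq\Z_n^{\times}$ records which multipliers $\ell$ can be realized (paired with a permutation) so as to fix $\Gamma_F$. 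The strategy is: first determine which permutations $\sigma\in\Sigma_3$ admit some $\ell\in\Z_n^{\times}$ with $(\ell,\sigma)\cdot\Gamma_F=\Gamma_F$; this gives $\map_{\Gamma_F}=\lmap_p(S_{0,3})$. Then, for each realizable $\sigma$, pin down the corresponding $\ell$ (it is forced, since two entries being equal up to a unit determines the unit modulo the relevant $n_i$, and the cyclic data set axioms force consistency across all entries). Finally, feed the resulting data into the two short exact sequences~(\ref{eqn:neseqn}) and~(\ref{eqn:ceseqn}) to read off $N(F)$ and $C(F)$.

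The case analysis is organized by the cycle type of the nontrivial permutation that appears. If a $3$-cycle $\sigma=(1\,2\,3)$ stabilizes $\Gamma_F$ with multiplier $\ell$, then chasing $(\ell,\sigma)\cdot\Gamma_F=\Gamma_F$ forces $c_2=\ell c_1$, $c_3=\ell c_2=\ell^2 c_1$, and $c_1=\ell c_3=\ell^3 c_1$, hence $\ell^3\equiv 1$; moreover $\ell\ne 1$ since otherwise $c_1=c_2=c_3$ would force $n_1=n_2=n_3$ but a $3$-cycle in $\lmap$ would then need a transposition too — more directly, if $\ell=1$ the three branch points would be interchangeable and the stabilizer would be all of $\Sigma_3$, contradicting that only $\Z_3$ occurs; I will argue this cleanly from the $\Gamma$-vector. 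In this subcase $\lmap_p(S_{0,3})\cong\Z_3$, the extension~(\ref{eqn:ceseqn}) with $\Z_n^{\times}(\Gamma_F)=\langle\ell\rangle$ of order $3$ gives $C(F)=\langle F\rangle$, and~(\ref{eqn:neseqn}) gives $N(F)$ as an extension of $\Z_3$ by $\Z_n=\langle F\rangle$ on which the generator acts by the multiplier $\ell$, i.e. $N(F)\cong\Z_n\rtimes_\ell\Z_3$. If instead only a transposition appears, say $\sigma=(2\,3)$ after reordering, then $c_2=\ell c_3$ and $c_3=\ell c_2$ force $\ell^2\equiv 1$, and since $c_2,c_3$ have the same order $n_1$ (here I use the convention that equal-order entries are grouped) one gets $\ell\equiv 1\pmod{n_1}$. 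Whether $\ell=1$ or $\ell\ne 1$ splits into (a) and (b): when $\ell=1$, $\Z_n^{\times}(\Gamma_F)=\{1\}$ so $C(F)=N(F)$, and since $F$ has order $n$ with a fixed-point-structure forcing $n\le 2g+2$ (the standard bound for periodic maps, also recoverable from the Riemann–Hurwitz axiom~(v) with $g_0=0$, $k=3$), the central $\Z_2$ splits off and $C(F)=N(F)\cong\Z_n\times\Z_2$; when $\ell\ne 1$, $C(F)=\langle F\rangle$ and $N(F)\cong\Z_n\rtimes_\ell\Z_2$. The remaining case — no nontrivial permutation is realizable by any unit — gives $\map_{\Gamma_F}=1$, hence $\lmap_p(S_{0,3})=1$ and $C(F)=N(F)=\langle F\rangle$.

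The steps in order: (1) invoke $\lmap_p(S_{0,3})\cong\map_{\Gamma_F}\leq\Sigma_3$ and enumerate subgroups of $\Sigma_3$; (2) translate "$\sigma$ stabilizes $\Gamma_F$ up to a unit" into explicit congruences on $c_1,c_2,c_3$, deducing the order constraints on $\ell$ and, crucially, that the stabilizer can contain a $3$-cycle only if it equals $\Z_3$ and cannot then contain a transposition (this rests on the orders $n_i$ and the fact that $(c_1,\ell c_1,\ell^2 c_1)$ with $\ell\ne 1$ admits no order-preserving transposition fixing it); (3) for each surviving possibility extract $\Z_n^{\times}(\Gamma_F)$ and apply~(\ref{eqn:neseqn}) and~(\ref{eqn:ceseqn}); (4) identify the resulting extensions as $\Z_n\rtimes_\ell\Z_m$ or $\Z_n\times\Z_2$ or $\langle F\rangle$, noting that the extensions split because the lifted generator can be chosen as a periodic mapping class realizing the corresponding symmetry of the orbifold. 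The main obstacle I anticipate is step~(2): cleanly ruling out the ``mixed'' configurations — e.g. showing a stabilizer containing a $3$-cycle cannot also contain a transposition, and that in case~(ii) the permutation must be $(2\,3)$ rather than $(1\,2)$ or $(1\,3)$ given the ordering $n_1\le n_2\le n_3$ — and organizing the congruences so that the trichotomy is visibly exhaustive and mutually exclusive. The arithmetic bound $n\le 2g+2$ in subcase~(ii)(a) is the only genuinely ``hard'' estimate, and it is standard (Wiman's bound), so I would cite it or derive it in one line from axiom~(v).
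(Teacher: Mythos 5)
Your overall strategy is the same as the paper's: reduce everything to computing the stabilizer of $\Gamma_F$ under the $\Z_n^{\times}\times\aut^+(\Gamma)$-action, case-split on the cycle type of the realizable permutation in $\Sigma_3$, and feed the resulting $\map_{\Gamma_F}$ and $\Z_n^{\times}(\Gamma_F)$ into the exact sequences (\ref{eqn:neseqn}) and (\ref{eqn:ceseqn}). There are, however, two places where the argument as written does not close.

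First, in case (i) you never actually establish $\ell\neq 1$. The justification you sketch --- that if $\ell=1$ the stabilizer would be all of $\Sigma_3$, ``contradicting that only $\Z_3$ occurs'' --- is circular: that the stabilizer is only $\Z_3$ is precisely the claim being proved, and if the configuration $c_1=c_2=c_3$ with $n_1=n_2=n_3=n$ were realizable, the stabilizer would indeed be $\Sigma_3$ and conclusion (i) would be false. Ruling it out genuinely requires the arithmetic of generating $\Gamma$-vectors: with $n_1=n_2=n_3=n$, Riemann--Hurwitz forces $n=2g+1$; the long relation gives $3c_1\equiv 0\pmod{n}$; and $\gcd(c_1,n)=1$ then forces $n=3$, i.e.\ $g=1$, contradicting $g\geq 2$. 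This is the one piece of number theory that makes your trichotomy exclusive, and it is missing from the proposal (you explicitly defer it).

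Second, the bound $n\leq 2g+2$ in (ii)(a). Wiman's bound on the order of a single periodic map is $4g+2$, not $2g+2$, so the citation you propose yields only $n\leq 4g+2$. Nor does ``axiom (v)'' alone suffice: with signature $(0;n_1,n,n)$ Riemann--Hurwitz gives $n=2gn_1/(n_1-1)$, which for $n_1=2$ reads $n=4g>2g+2$; one again needs the long relation (here $c_1+2c_2\equiv 0\pmod n$, which is incompatible with $c_2$ being a unit when $n=4g$) to exclude such configurations. The paper sidesteps all of this by applying Maclachlan's bound $4g+4$ on the order of an abelian subgroup of $\map(S_g)$ to $C(F)=N(F)\cong\Z_n\times\Z_2$, which has order $2n$. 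Either route works, but as written your proposal does not prove the stated inequality. The remaining steps (the congruence chases, the exclusion of a transposition coexisting with a $3$-cycle when $\ell\neq 1$, and the identification of the extensions) match the paper's argument and are fine, modulo the fact that both you and the paper take the splitting of the extension defining $\Z_n\rtimes_{\ell}\Z_m$ for granted.
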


\begin{proof}
First we determine $\lmap_p(S_{0,3})$. For $\delta \in \map(S_{0,3})$ such that $\delta \neq 1$ and $\ell\in \Z_n^{\times}$, assume that $(\ell,\delta)\cdot \Gamma_F=\Gamma_F$. For $|\delta|=3$, without loss of generality, let $\delta=(1,2,3)$. Then we have
\[
(\ell,\delta)\cdot \Gamma_F=\left(\ell c_3,\ell c_1, \ell c_2\right)=(c_1,c_2,c_3).
\]
This implies that $n_1=n_2=n_3=n$, $c_2\equiv \ell c_1 \pmod n$, $c_3\equiv \ell c_2 \equiv \ell ^2 c_1 \pmod n$, $c_1 \equiv \ell c_3 \pmod n$, and $\ell^3=1$. Since $n_1\leq n_2\leq n_3$, for $|\delta|=2$, without loss of generality, let $\delta=(2,3)$. Then we have 
\[
(\ell,\delta)\cdot \Gamma_F=\left(\ell c_1,\ell c_3,\ell c_2\right)=(c_1,c_2,c_3).
\]
This implies that $n_2=n_3=n$, $\ell\equiv 1 \pmod{n_1}$, $c_3\equiv \ell c_2 \pmod n$, $c_2\equiv \ell c_3 \pmod n$, and $\ell^2=1$. Now, we consider these possibilities one by one.

\textbf{Case 1:} There exist $\ell\in \Z_n^{\times}$ such that $\ell^3=1$, $n_1=n_2=n_3=n$, and $(c_1,c_2,c_3)=(c_1,\ell c_1, \ell^2 c_1)$. From the Riemann-Hurwitz formula (\ref{eqn:rh}), we have $n=2g+1$. If $\ell=1$, then $3c_1\equiv 0 \pmod{2g+1}$. Since $\gcd(c_1,2g+1)=1$, we have $2g+1=3$, which is not possible as $g\geq 2$. Thus, $\ell \neq 1$. For $\delta=(1,2,3)\in \Sigma
_3$, and we have $(\ell,\delta)\cdot\Gamma_F=\Gamma_F$. From Theorem \ref{thm:main} and Remark \ref{rem:main}, it follows that $\clmap_p(S_{0,3})=1$ and $\lmap_p(S_{0,3})\cong \Z_3$. Hence $C(F)=\langle F \rangle$ and $N(F)\cong \Z_n \rtimes_{\ell} \Z_3.$

\textbf{Case 2:} There exist $\ell \in \Z_n^{\times}$ such that $\ell^2=1$, $n_2=n_3=n$, $\ell\equiv 1 \pmod{n_1}$, and $(c_1,c_2,c_3)=(c_1,c_2,\ell c_2)$. For $\delta=(2,3)\in \Sigma_3$, we have $(\ell,\delta)\cdot\Gamma_F=\Gamma_F$. Thus, by Theorem \ref{thm:main} and Remark \ref{rem:main}, we have $\lmap_p(S_{0,3})\cong \Z_2$. If $\ell=1$ then $\lmap_p(S_{0,3})=\clmap_p(S_{0,3})$, and therefore $C(F)=N(F)\cong \Z_n \times \Z_2$. By a result of Maclachlan \cite{4g+4}, the order of a finite abelian subgroup of $\map(S_g)$ can be at most $4g+4$. Since $\Z_n\times \Z_2 $ is abelian, we must have $n \leq 2g+2$. If $\ell \neq 1$, then $\clmap_p(S_{0,3})=1$. Hence $C(F)=\langle F \rangle$ and $N(F)\cong \Z_n \rtimes_{\ell} \Z_2$. 

\textbf{Case 3:} Otherwise, we have $\lmap_p(S_{0,3})=1$, and so it follows that $C(F)=N(F)=\langle F \rangle$.
\end{proof}

The following example shows that all of the cases mentioned in the statement of Proposition \ref{prop:norm_irr} occur.

\begin{exmp}
In Table \ref{tab:first}, we list the isomorphism type of the normalizers and the centralizers of irreducible periodic mapping classes up to conjugacy and power in $\map(S_3)$, and the conjugacy classes of their generators. In this table, $D_G$ denotes the conjugacy class of the periodic map, which lifts under the cover $p:S_3 \to \Orb_F$. 

\begin{table}[ht]
\centering
\begin{tabular}{|c|c|c|c|c|}
\hline 
\rule[-1ex]{0pt}{2.5ex} \text{Sr. No.} & $D_F$ & $N(F)$ & $C(F)$ & $D_G$ \\ 
\hline 
\rule[-1ex]{0pt}{2.5ex} 1 & $(7,0;(1,7),(2,7),(4,7))$ & $\Z_7\rtimes_2 \Z_3$ & $\Z_7$ & $(3,1;(1,3),(2,3))$ \\ 
\hline 
\rule[-1ex]{0pt}{2.5ex} 2 & $(7,0;(5,7),(1,7),(1,7))$ & $\Z_7\times \Z_2$ & $\Z_7\times \Z_2$ & $(2,0;(1,2)_8)$ \\ 
\hline 
\rule[-1ex]{0pt}{2.5ex} 3 & $(8,0;(1,4),(1,8),(5,8))$ & $\Z_8\rtimes_5 \Z_2$ & $\Z_8$ & $(2,1;(1,2)_4)$ \\ 
\hline 
\rule[-1ex]{0pt}{2.5ex} 4 & $(8,0;(3,4),(1,8),(1,8))$ & $\Z_8\times \Z_2$ & $\Z_8\times \Z_2$ & $(2,0;(1,2)_8)$ \\ 
\hline 
\rule[-1ex]{0pt}{2.5ex} 5 & $(9,0;(1,3),(1,9),(5,9))$ & $\Z_9$ & $\Z_9$ & - \\ 
\hline 
\rule[-1ex]{0pt}{2.5ex} 6 & $(12,0;(1,2),(1,12),(5,12))$ & $\Z_{12}\rtimes_5 \Z_2$ & $\Z_{12}$ & $(2,1;(1,2)_4)$ \\ 
\hline 
\rule[-1ex]{0pt}{2.5ex} 7 & $(12,0;(2,3),(1,4),(1,12))$ & $\Z_{12}$ & $\Z_{12}$ & - \\ 
\hline 
\rule[-1ex]{0pt}{2.5ex} 8 & $(14,0;(1,2),(3,7),(1,14))$ & $\Z_{14}$ & $\Z_{14}$ & - \\ 
\hline 
\end{tabular}
\caption{Isomorphism classes of the normalizers and the centralizers of irreducible periodic mapping classes up to conjugacy and power in $\map(S_3)$.}
\label{tab:first} 
\end{table}
\end{exmp}

\subsection{A presentation for the liftable mapping class group associated with some reducible periodic mapping classes}
Since we have already determined liftable mapping class groups, normalizers, and centralizers of all irreducible periodic mapping classes, we now focus on some reducible periodic mapping classes. Let $F\in \map(S_g)$ be an irreducible periodic mapping class with
\[
D_F=(n,0;(d_1,n_1),(d_2,n_2),(1,n)).
\]
From the theory developed in \cite{rajeevsarathy19}, $D_F$ can be realized by $2\pi/n$-rotation $\F$ of a polygon $\mathcal{P}$ with side-pairing. Without loss of generality, we assume that $F$ has such a representative. We note that the tuple $(1,n)$ in $D_F$ corresponds to the fixed point at the center of $\mathcal{P}$. Consider the maps $\F$ and $\F^{-1}$ on two distinct copies of $\mathcal{P}$. Since the sum of the angles at the center of the polygon $\mathcal{P}$ for $\F$ and $\F^{-1}$ adds up to $0 \pmod{2\pi}$, we remove small open (invariant) disks around the centers of two copies of $\mathcal{P}$ and glue the resultant boundaries. This leads to a map $\F'$ representing a spherical periodic mapping class $F'\in \map(S_{2g})$ with
\[
D_{F'}=(n,0;(d_1,n_1),(-d_1,n_1),(d_2,n_2),(-d_2,n_2)).
\]
In the following proposition, we derive a presentation for $\lmap_p(S_{0,4})$, where $p:S_{2g} \to \Orb_{F'}$.

\begin{prop}
\label{prop:present}
For $g$ even and $n>g+1$, let $F \in \map(S_g)$ be a periodic mapping class with
\[
D_F=(n,0;(d_1,n_1),(-d_1,n_1),(d_2,n_2),(-d_2,n_2)).
\]
Then the following statements hold.
\begin{enumerate}[(i)]
\item When $n_1=2$, we have:
\begin{enumerate}
\item $[N(F):C(F)]=2$ and $[\map(S_{0,4}):\lmap_p(S_{0,4})]=6$.
\item $\clmap_p(S_{0,4})=\left \langle \sigma_1, a_{13} \mid (\sigma_1 a_{13})^2=1 \right \rangle$.
\item $\lmap_p(S_{0,4})=\left \langle \sigma_1, \sigma_3, a_{13} \mid \sigma_3^2=\sigma_1^2,~[\sigma_1,\sigma_3]=(\sigma_1 a_{13})^2=(\sigma_3 a_{13})^2=1 \right \rangle$.
\end{enumerate}
\item When $n_1 \neq 2$, we have:
\begin{enumerate}
\item $[N(F):C(F)]=2$ and $[\map(S_{0,4}):\lmap_p(S_{0,4})]=12$.
\item $\clmap_p(S_{0,4})=\pmap(S_{0,4})$.
\item $\lmap_p(S_{0,4})=\left \langle a_{12}, a_{13}, \delta=\sigma_1^{-1}\sigma_3 \mid \delta^2=[a_{12},\delta]=[a_{13},\delta]=1\right \rangle$.
\end{enumerate}
\end{enumerate}
\end{prop}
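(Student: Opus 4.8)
The plan is to apply Algorithm~\ref{algo:main} step by step to the explicit data set $D_F = (n,0;(d_1,n_1),(-d_1,n_1),(d_2,n_2),(-d_2,n_2))$, treating the two cases $n_1 = 2$ and $n_1 \neq 2$ separately. First I would compute the set $B$ of liftable half-twists: a half-twist $\tau_{i,j}$ lies in $\clmap_p(S_{0,4})$ exactly when the $(i,j)$-th entries of $\Gamma_F = (c_1,c_2,c_3,c_4) = (\tfrac{n}{n_1}d_1, -\tfrac{n}{n_1}d_1, \tfrac{n}{n_2}d_2, -\tfrac{n}{n_2}d_2)$ agree as pairs $(d,n_i)$. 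When $n_1 = 2$ the first two entries become $(\tfrac n2, 2)$ and $(-\tfrac n2, 2) = (\tfrac n2, 2)$, so they coincide and the half-twist $\sigma_1$ (swapping branch points $1,2$) is in $B$; similarly for the last pair if $n_2 = 2$, but generically $n_2 \neq 2$ and the third and fourth entries differ, so $\sigma_3$ is \emph{not} liftable as a pure swap, though the square $\sigma_3^2 = a_{33}$-type element (a Dehn twist) still lies in $\pmap(S_{0,4}) \subset \clmap$. When $n_1 \neq 2$, the entries $(d_1, n_1)$ and $(-d_1, n_1)$ differ (since $d_1 \not\equiv -d_1$), so no half-twist is liftable and $B$ contributes nothing beyond $\pmap(S_{0,4})$. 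Next I would determine the set $C$ via Remark~\ref{rem:main}: by Theorem~\ref{thm:norm_cent} and Equation~\eqref{eqn:lm/clm_se}, $\Z_n^{\times}(\Gamma_F)$ records the multipliers $\ell$ for which $\ell \cdot \Gamma_F$ is a permutation of $\Gamma_F$; here $\ell = -1$ always works (it pairs $c_1 \leftrightarrow c_2$ and $c_3 \leftrightarrow c_4$), realized by the mapping class $\delta = \sigma_1^{-1}\sigma_3$ or an appropriate half-twist word, and one checks that $n > g+1$ forces $\Z_n^{\times}(\Gamma_F) = \{\pm 1\} \cong \Z_2$ (no further multiplier can fix the signature because of the Riemann--Hurwitz constraint). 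This gives the index computations $[N(F):C(F)] = |\Z_n^{\times}(\Gamma_F)| = 2$ in both cases, and the indices in $\map(S_{0,4})$ follow from $[\map(S_{0,4}):\lmap_p] = [\Sigma_4 : H_1]$ with $|H_1|$ determined by $\psi(B \cup C)$.

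With the generating sets in hand, the core of the argument is Step~\ref{step:pres_H1H2}: identifying the images $H_1 = \psi(\lmap_p(S_{0,4}))$ and $H_2 = \psi(\clmap_p(S_{0,4}))$ inside $\Sigma_4$, together with presentations of these subgroups in the generators $\psi(B \cup C)$ and $\psi(B)$. In the case $n_1 = 2$ one expects $H_2 = \langle (1\,2) \rangle \cong \Z_2$ and $H_1 = \langle (1\,2), (3\,4) \rangle \cong \Z_2 \times \Z_2$ (the $(3\,4)$ swap coming from $\psi(\delta)$); in the case $n_1 \neq 2$, $H_2$ is trivial so $\clmap_p = \pmap(S_{0,4})$, while $H_1 = \langle (1\,2)(3\,4)\rangle \cong \Z_2$. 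Then I would invoke Lemma~\ref{lem:presen} together with the explicit presentations of $\map(S_{0,4})$ and $\pmap(S_{0,4})$ from Lemma~\ref{lem:genset_mod0} to assemble presentations of $\clmap_p(S_{0,4})$ and $\lmap_p(S_{0,4})$. Concretely, $\pmap(S_{0,4})$ is free of rank $2$ on $a_{12}, a_{13}$ (using relation (v) of Lemma~\ref{lem:genset_mod0}(b) to eliminate $a_{23}$), which is why the case $n_1 \neq 2$ yields a presentation on $a_{12}, a_{13}, \delta$ with $\delta$ of order $2$ acting; the relations $[a_{12},\delta] = [a_{13},\delta] = 1$ are the conjugacy relations $R_{NQ}$, which must be verified by a direct computation of how the half-twist word $\delta$ conjugates the $a_{1j}$, using the braid relations. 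For $n_1 = 2$ the kernel generators are $\sigma_1^2, \sigma_3^2 \in \pmap$ (or rather one keeps $\sigma_1, \sigma_3$ as generators since $\sigma_1 \in \clmap$ directly) and one extracts the relations $\sigma_1^2 = \sigma_3^2$, $[\sigma_1,\sigma_3]=1$, $(\sigma_1 a_{13})^2 = (\sigma_3 a_{13})^2 = 1$ from the $\map(S_{0,4})$ relations restricted to this subgroup — these are precisely the known Birman--Hilden-type relations for the $4$-punctured sphere and should be matched against the chain/lantern relations in Lemma~\ref{lem:genset_mod0}(a).

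The main obstacle I anticipate is \textbf{Step~\ref{step:pres_H1H2} combined with the conjugacy-relation verification}: showing that the presentations of $H_1, H_2$ are as claimed requires knowing that there are \emph{no extra relations} among the chosen generators beyond the obvious ones, which amounts to proving that $\lmap_p(S_{0,4})$ is exactly the subgroup generated by $A \cup B \cup C$ with no hidden identifications — this is where the Reidemeister--Schreier-type bookkeeping of Lemma~\ref{lem:presen} has to be carried out honestly, tracking each relator of $\map(S_{0,4})$ or $\pmap(S_{0,4})$ through the rewriting. In particular, verifying $[a_{12},\delta] = [a_{13},\delta] = 1$ (case $n_1\neq 2$) and $\sigma_1^2 = \sigma_3^2$ together with the mixed relations $(\sigma_i a_{13})^2 = 1$ (case $n_1 = 2$) will require a careful diagrammatic or word computation in the braid-type group $\map(S_{0,4})$; this is routine but delicate, and getting the exponents and orientations right is the crux. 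The hypothesis $n > g+1$ enters precisely to pin down $\Z_n^{\times}(\Gamma_F) = \Z_2$ and to guarantee the signature is $(0;n_1,n_1,n_2,n_2)$ with four genuinely branched points (so $S_{0,4}$, not fewer punctures), and I would check via Riemann--Hurwitz that smaller $n$ could collapse entries or introduce additional automorphisms, which is why the bound is needed for the stated presentations to hold verbatim.
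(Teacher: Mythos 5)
Your proposal follows essentially the same route as the paper: determine $B$ and $C$ from the generating $\Gamma$-vector (getting $H_2=\langle(1\,2)\rangle$, $H_1=\langle(1\,2),(3\,4)\rangle$ when $n_1=2$, and $H_2=1$, $H_1=\langle(1\,2)(3\,4)\rangle$ when $n_1\neq 2$, using $n>g+1$ to rule out extra symmetries), then assemble the presentations via Lemma~\ref{lem:presen} with $\pmap(S_{0,4})$ free of rank $2$ on $a_{12},a_{13}$. The remaining work you flag — the explicit conjugacy relations such as $\sigma_i a_{13}\sigma_i^{-1}=a_{13}^{-1}\sigma_i^{-2}$ — is exactly what the paper records to finish, so the plan is sound and matches the published argument.
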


\begin{proof}
\textbf{Case 1:} First, assume that $n_1=2$. Since $\lcm(n_1,n_2)=n$, either $n_2=n$ or $n_2=n/2$. By the Riemann-Hurwitz formula (\ref{eqn:rh}), it follows that either $\Gamma(\Orb_F)=(0;2,2,2g,2g)$ or $\Gamma(\Orb_F)=(0;2,2,g+1,g+1)$. Up to power, either
\begin{enumerate}[(i)]
\item $D_F=(2g+2,0;(1,2),(1,2),(1,g+1),(-1,g+1))$ or
\item $D_F=(2g,0;(1,2),(1,2),(1,2g),(-1,2g))$.
\end{enumerate}
By Theorem \ref{thm:main}, it follows that $H_2=\langle \psi(\sigma_1)=(1,2)\rangle \cong \Z_2$. For $\delta=\sigma_3$, we have $(-1,\delta)\cdot \Gamma_F=\Gamma_F$. Therefore, $\delta \in \lmap_p(S_{0,4})$. Thus, by Remark \ref{rem:main}, we get
\[
H_1=\langle (1,2),(1,2)(3,4)\rangle=\langle (1,2),(3,4)\rangle\cong K_4,
\]
where $K_4$ is the Klein four-group. Hence,
\[
[N(F):C(F)]=[\lmap_p(S_{0,4}):\clmap_p(S_{0,4})]=[K_4,\Z_2]=2
\]
and
\[
[\map(S_{0,4}):\lmap_p(S_{0,4})]=[\Sigma_4:K_4]=6.
\]

We now use Algorithm \ref{algo:main} to obtain a presentation for $\lmap_p(S_{0,4})$. Assume that
\[
H_1=\langle c=(1,2),d=(3,4)\mid c^2=d^2=[c,d]=1 \rangle \text{ and } H_2=\langle c \mid c^2=1\rangle.
\]
For generators of $\map(S_{0,4})$ and $\pmap(S_{0,4})$, we follow the notation as in Lemma \ref{lem:genset_mod0}. It can be seen that $\pmap(S_{0,4})$ is a free group of rank $2$ generated by $\{a_{12},a_{13}\}$. Therefore, the kernel generators are $S_A=\{a_{12},a_{13}\}$. The lifted generators are $S_C=\{\sigma_1,\sigma_3\}$. Since $a_{12}=\sigma_1^2$, we have $\lmap_p(S_{0,4})=\langle \sigma_1,\sigma_3,a_{13}\rangle$. By Lemma \ref{lem:presen}, there are the following relations in $\lmap_p(S_{0,4})$:
\begin{enumerate}[(i)]
\item Lifted relations:
\begin{enumerate}[(a)]
\item $\sigma_3^2=\sigma_1^{2}$.
\item $[\sigma_1,\sigma_3]=1$.
\end{enumerate}
\item Conjugacy relations:
\begin{enumerate}[(a)]
\item $\sigma_1a_{13}\sigma_1^{-1}=a_{13}^{-1}\sigma_1^{-2}$.
\item $\sigma_3a_{13}\sigma_3^{-1}=a_{13}^{-1}\sigma_3^{-2}$.
\end{enumerate}
\end{enumerate}
Hence, the above presentation simplifies as stated.

\textbf{Case 2:} We now assume that $n_1 \neq 2$. In this case $d_1 \neq -d_1$. Since $n>g+1$, $n_1 \neq n_2$. By Theorem \ref{thm:main}, we have $\clmap_p(S_{0,4})=\pmap(S_{0,4})$. Furthermore, for $\delta=\sigma_1^{-1}\sigma_3$, since $\psi(\delta)=(1,2)(2,4)$, we have $(-1,\delta)\cdot \Gamma_F=\Gamma_F$. Therefore $\delta\in \lmap_p(S_{0,4})$. In this case, we have  
\[
H_1=\langle c=(1,2)(3,4) \mid c^2=1\rangle.
\]
Thus, it follows that
\[
[N(F):C(F)]=[\lmap_p(S_{0,4}):\clmap_p(S_{0,4})]=[\Z_2:1]=2
\]
and
\[
[\map(S_{0,4}):\lmap_p(S_{0,4})]=[\Sigma_4:\Z_2]=12.
\]
We now use Algorithm \ref{algo:main} to derive a presentation for $\lmap_p(S_{0,4})$. The kernel generators are $S_A=\{a_{12},a_{13}\}$ and the lifted generators are $S_C=\{\delta=\sigma_1^{-1}\sigma_3\}$. Therefore, we have $\lmap_p(S_{0,4})=\langle a_{12},a_{13},\delta\rangle$. By Lemma \ref{lem:presen}, the following relations hold in $\lmap_p(S_{0,4})$:
\begin{enumerate}[(i)]
\item Lifted relations: $\delta^2=1$.
\item Conjugacy relations:
\begin{enumerate}
\item $\delta a_{12}\delta^{-1}=a_{12}$.
\item $\delta a_{13}\delta^{-1}=a_{13}$.
\end{enumerate}
\end{enumerate}
Hence, our assertion follows.
\end{proof}

Now, we derive a presentation for $N(F)$ and $C(F)$ for $F \in \map(S_g)$ with
\[
D_F=(2g+2,0;(1,2),(1,2),(1,2g+2),(-1,2g+1))
\]
by using Lemma \ref{lem:presen} and the presentations of $\clmap_p(S_{0,4})$ and $\lmap_p(S_{0,4})$ derived in Proposition \ref{prop:present}. By \cite{2g+2}, $F$ is a reducible mapping class of the highest order. Furthermore, it is known \cite{kulkarni97} that any periodic mapping class having a representative with a fixed point can be realized by a rotation of a polygon. Since $F$ has no fixed points, it can not be realized as a rotation of polygons. As described before, $F$ has been constructed by pasting two irreducible cyclic actions. This realization of $D_F$ will be used to derive a presentation for $N(F)$.

\begin{cor}
\label{cor:presen_2g+2}
For an even integer $g>0$, let $F\in \map(S_g)$ be such that
\[
D_F=(2g+2,0;(1,2),(1,2),(1,2g+2),(-1,2g+1)).
\]
Then
\[
C(F)=\left\langle F, G_1, G_2 \mid F^{2g+2}=[G_1,F]=[G_2,F]=1, ~(G_1G_2)^2=F^{g+2}\right\rangle
\]
and
\begin{align*}
\begin{split}
N(F)=\big\langle F, G_1, G_2, G_3 \mid & ~F^{2g+2}=[G_1,F]=[G_2,F]=[G_1,G_3]=1,\\ &~(G_1G_2)^2=F^{g+2}, ~G_3FG_3^{-1}=F^{-1}, G_1^2=G_3^2, ~(G_3G_2)^2=F^{g+1} \big\rangle.
\end{split}
\end{align*}
\end{cor}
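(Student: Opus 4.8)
The plan is to apply the group-extension presentation recipe of Lemma~\ref{lem:presen} to the two Birman--Hilden short exact sequences furnished by Theorem~\ref{thm:norm_cent} and Remark~\ref{rem:iso_norm},
\[
1 \to \langle F \rangle \to C(F) \to \clmap_p(S_{0,4}) \to 1
\quad\text{and}\quad
1 \to \langle F \rangle \to N(F) \to \lmap_p(S_{0,4}) \to 1,
\]
where $p : S_g \to \Orb_F \approx S_{0,4}$ is the spherical cover attached to the given $D_F$; this is exactly the sub-case $n_1 = 2$, $n = 2g+2$ of Proposition~\ref{prop:present}(i). For the kernel I would use $\langle F \mid F^{2g+2} \rangle$, and for the quotients the presentations $\clmap_p(S_{0,4}) = \langle \sigma_1, a_{13} \mid (\sigma_1 a_{13})^2 = 1 \rangle$ and $\lmap_p(S_{0,4}) = \langle \sigma_1, \sigma_3, a_{13} \mid \sigma_3^2 = \sigma_1^2,\ [\sigma_1,\sigma_3] = (\sigma_1 a_{13})^2 = (\sigma_3 a_{13})^2 = 1 \rangle$ obtained there. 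Fixing one lift $G_1$ of $\sigma_1$, $G_2$ of $a_{13}$, and $G_3$ of $\sigma_3$ (only $G_1, G_2$ are needed for $C(F)$), Lemma~\ref{lem:presen} produces presentations in the generators $\{F, G_1, G_2, G_3\}$ and $\{F, G_1, G_2\}$ whose relations are: the one kernel relation $F^{2g+2} = 1$; the conjugacy relations describing how each $G_i$ acts on $F$; and one lifted relation for each quotient relator, of the form (that relator, with each $\sigma$ replaced by the corresponding $G$) $= F^{m}$ for some $m \in \Z_{2g+2}$.

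The conjugacy relations are read off from the exact sequence~(\ref{eqn:ceseqn}): since $\sigma_1, a_{13} \in \clmap_p(S_{0,4})$ correspond to $\ell = 1 \in \Z_n^{\times}(\Gamma_F)$ we get $[G_1, F] = [G_2, F] = 1$, while the computation in the proof of Proposition~\ref{prop:present}(i) shows $(-1, \sigma_3)\cdot \Gamma_F = \Gamma_F$, so $G_3$ corresponds to $\ell = -1$ and $G_3 F G_3^{-1} = F^{-1}$. It remains to determine the $F$-powers $m$ attached to the lifted relators. For $[\sigma_1,\sigma_3] = 1$ this is transparent: $\sigma_1$ and $\sigma_3$ are supported in disjoint disks $D_{12}, D_{34}\subset S_{0,4}$, so their compactly-supported lifts have disjoint supports in $S_g$ and hence $[G_1, G_3] = 1$ on the nose. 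The remaining three powers --- for $\sigma_3^2 = \sigma_1^2$, for $(\sigma_1 a_{13})^2 = 1$, and for $(\sigma_3 a_{13})^2 = 1$ --- must be computed from the geometry; I claim they come out as $1$, $F^{g+2}$, and $F^{g+1}$ respectively. Granting this, substituting into the output of Lemma~\ref{lem:presen} and performing an elementary Tietze simplification returns exactly the two presentations in the statement, the one for $C(F)$ also coming directly from the first exact sequence in the generators $\{F, G_1, G_2\}$.

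The heart of the matter --- and the main obstacle --- is the explicit computation of the exponents $g+2$ and $g+1$ (and the vanishing of the $\sigma_3^2=\sigma_1^2$ correction). I would carry this out in the concrete model recalled just before the corollary: $F$ arises by deleting invariant disks from two copies of a polygon carrying a $2\pi/(2g+2)$-rotation and its inverse and gluing along the resulting boundary circles, and one can realize explicit lifts $G_1, G_3$ of the half-twists and $G_2$ of the Dehn twist $a_{13}$ by the lifting theory of \cite{rajeevsarathy19, rajeevsarathy213}. Composing these lifts, simplifying the resulting pseudo-periodic mapping classes, and reading off the power of $F$ --- either from their action on a canonical system of axis/reduction curves or homologically from the induced action on $H_1(S_g;\Z)$ together with the prescribed rotation data --- yields the exponents. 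The delicacy is precisely that $F$ has no fixed point, hence is not itself a rotation of a single polygon, so one is forced to work inside the pasted model and to track the gluing circle and how the lifted twists interchange and mix the two halves; once this bookkeeping is done, everything else (the two exact sequences, the quotient presentations from Proposition~\ref{prop:present}, and the final assembly via Lemma~\ref{lem:presen}) is routine.
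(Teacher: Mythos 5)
Your structural setup coincides with the paper's: both apply Lemma~\ref{lem:presen} to the two Birman--Hilden extensions with kernel $\langle F\rangle$ and quotients $\clmap_p(S_{0,4})$, $\lmap_p(S_{0,4})$ presented as in Proposition~\ref{prop:present}(i), and both read off the conjugacy relations from the values $\ell=1$ (for $\sigma_1,a_{13}$) and $\ell=-1$ (for $\sigma_3$). The difficulty is exactly where you locate it --- the powers of $F$ attached to the lifted relators --- but at that point your proposal stops short: the exponents $F^{g+2}$ and $F^{g+1}$ and the vanishing of the corrections for $G_1^2=G_3^2$ are \emph{claimed}, with only a sketch of how one might verify them. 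The paper actually carries this out: it writes down explicit lifts in the pasted-polygon model ($F=T_{ab}T_{bc}T_{ef}^{-1}T_{de}^{-1}$, $G_1=T_{c_1}T_{c_2}T_{c_3}$, $G_2=T_{ab}T_{bc}$, $G_3=G_1(T_{ab}T_{bc}T_{c_3}T_{de}T_{ef})^3$ for $g=2$) and then pins down each exponent by computing the images of these elements under the symplectic representation $\map(S_2)\to \mathrm{Sp}(4,\Z)$, which is faithful on $\langle F\rangle$. Since the entire content of the corollary beyond Proposition~\ref{prop:present} is precisely this computation, deferring it leaves the proof incomplete.

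Moreover, the one lifted relation you do attempt to justify is justified incorrectly. You argue that $[G_1,G_3]=1$ ``on the nose'' because $\sigma_1$ and $\sigma_3$ are supported in disjoint disks $D_{12},D_{34}$ and hence have lifts with disjoint supports. But no lift of $\sigma_3$ can be supported in $p^{-1}(D_{34})$: any such lift would agree with the identity off an $F$-invariant proper subsurface, so conjugation by it would fix $F$ pointwise off that subsurface; since this lift must satisfy $G_3FG_3^{-1}=F^{-1}$ (as you yourself note, $\sigma_3$ corresponds to $\ell=-1$), one would get $F^2=\mathrm{id}$ outside $p^{-1}(D_{34})$, contradicting $|F|=2g+2\geq 6$. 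This is visible in the paper's explicit formula for $G_3$, which involves the global handle-swap $(T_{ab}T_{bc}T_{c_3}T_{de}T_{ef})^3$ exchanging the two polygonal halves and is in no sense supported near the preimage of $D_{34}$. So the relation $[G_1,G_3]=1$ (like the other lifted relations) genuinely requires a computation with the explicit lifts; the disjoint-support shortcut is not available.
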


\begin{proof}
We provide the computation details for $g=2$ as similar computations and methods will generally work. We will need the oriented simple closed close curves $ab,~bc, ~de,~ef$, and $c_1$ shown in Figure \ref{fig:hexagons} for describing the generators of the normalizer of $F$. (Here, $ab$ is the curve homotopic to the concatenation of $a$ and $b$.)
\begin{figure}[ht]
\centering
\labellist
\pinlabel $a$ at 145, 25
\pinlabel $b$ at 170, 98
\pinlabel $c$ at 132, 172
\pinlabel $a$ at 65, 187
\pinlabel $b$ at -5, 95
\pinlabel $c$ at 45, 10
\pinlabel $c_1$ at 45, 140
\pinlabel $z$ at 100, 92
\pinlabel $ab$ at 83, 30
\pinlabel $bc$ at 118, 150

\pinlabel $d$ at 325, 10
\pinlabel $e$ at 375, 98
\pinlabel $f$ at 352, 162
\pinlabel $d$ at 245, 177
\pinlabel $e$ at 195, 98
\pinlabel $f$ at 270, -3
\pinlabel $c_1$ at 250, 50
\pinlabel $z$ at 245, 92
\pinlabel $de$ at 327, 52
\pinlabel $ef$ at 290, 160

\endlabellist
\includegraphics[scale=.8]{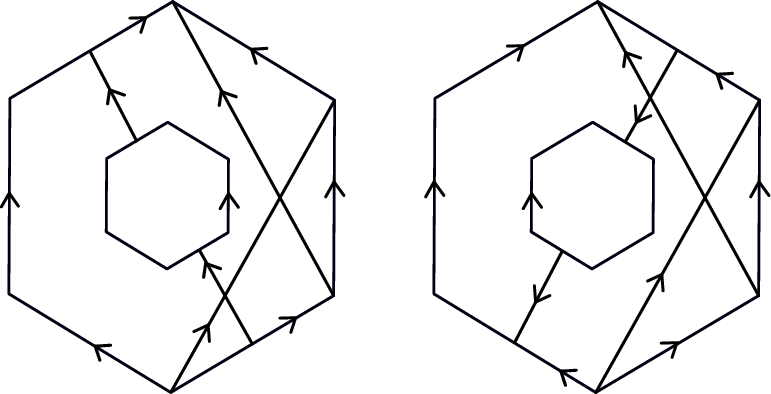}
\caption{A realization of a reducible periodic mapping class $F\in \map(S_2)$ of order $6$ with reduction system $\{z\}$.}
\label{fig:hexagons}
\end{figure}
It can be seen that $F=T_{ab}T_{bc}T_{ef}^{-1}T_{de}^{-1}$. For $c_2:=F(c_1)$ and $c_3:=F(c_2)$, let $G_1=T_{c_1}T_{c_2}T_{c_3}$, $G_2=T_{ab}T_{bc}$, and $G_3=G_1(T_{ab}T_{bc}T_{c_3}T_{de}T_{ef})^3$. By comparing actions of generators of $\lmap_p(S_{0,4})$ and their lifts on suitable curves, one can see that $\sigma_1$ lifts to $G_1$, $a_{13}$ lifts to $G_2$, and $\sigma_1^{-1}\sigma_3$ lifts to the handle swap map $G=(T_{ab}T_{bc}T_{c_3}T_{de}T_{ef})^3$ under $p$. Thus $\sigma_3$ lifts to $G_3=G_1G$. Now, we find relations in $N(F)$ and $C(F)$. According to Lemma \ref{lem:presen}, we have the following relations in $N(F)$:
\begin{enumerate}[(i)]
\item Kernel relations: $F^6=1$.
\item Conjugacy relations:
\begin{enumerate}[(a)]
\item $G_1FG_1^{-1}=F^{i_1}$.
\item $G_2FG_2^{-1}=F^{i_2}$.
\item $G_3FG_3^{-1}=F^{i_3}$.
\end{enumerate}
\item Lifted Relations:
\begin{enumerate}[(a)]
\item $G_1^2G_3^{-2}=F^{i_4}$.
\item $[G_1,G_3]=F^{i_5}$.
\item $(G_1G_2)^2=F^{i_6}$.
\item $(G_3G_2)^2=F^{i_7}$.
\end{enumerate}
\end{enumerate} 
We make use of the fact that the symplectic representation $\psi: \map(S_2)\to \text{Sp}(4,\Z)$ is faithful when restricted to finite subgroups of $\map(S_2)$. Now we list the images of the generators of $N(F)$ under the map $\psi$ with respect to the basis $\{[ab],[bc],[de],[ef]\}$ of $H_1(S_2, \Z)$, where $[ab]$ is the homology class of the curve $ab$. We have the following:
\[
\psi(F)=
\begin{pmatrix}
0 & -1 & 0 & 0 \\ 
1 & 1 & 0 & 0 \\ 
0 & 0 & 1 & 1 \\ 
0 & 0 & -1 & 0
\end{pmatrix},
\\
~\psi(G_1)=
\begin{pmatrix}
0 & -2 & -2 & -1 \\ 
2 & 2 & 1 & 2 \\ 
-2 & -1 & 0 & -2 \\ 
1 & 2 & 2 & 2
\end{pmatrix},
\\
~\psi(G_2)=
\begin{pmatrix}
0 & -1 & 0 & 0 \\ 
1 & 1 & 0 & 0 \\ 
0 & 0 & 1 & 0 \\ 
0 & 0 & 0 & 1
\end{pmatrix}
\]

\[
\psi(G)=
\begin{pmatrix}
0 & 0 & -1 & 0 \\ 
0 & 0 & 0 & -1 \\ 
-1 & 0 & 0 & 0 \\ 
0 & -1 & 0 & 0
\end{pmatrix},\text{ and }
\\
\psi(G_3)=
\begin{pmatrix}
2 & 1 & 0 & 2 \\ 
-1 & -2 & -2 & -2 \\ 
0 & 2 & 2 & 1 \\ 
-2 & -2 & -1 & -2
\end{pmatrix}.
\]
Now, it follows that $i_1=1$, $i_2=1$, $i_3=-1$, $i_4=1$, $i_5=1$, $i_6=4$, and $i_7=3$. Hence, our asserting follows. (Alternatively, writing all generators as a product of Dehn twists, one can directly perform computations to verify the relations in the presentation of $N(F)$.)
\end{proof}

The following result is a direct Corollary of Proposition \ref{prop:present}.

\begin{cor}
For integers $g>1$ and $g+1<n<2g$ such that $g$ is even, let $F \in \map(S_g)$ be a periodic mapping class with
\[
D_F=(n,0;(d_1,n_1),(-d_1,n_1),(d_2,n_2),(-d_2,n_2)).
\]
Then
\[
C(F)=\langle F,G_1,G_2 \mid F^n=[G_1,F]=[G_2,F]=1 \rangle.
\]
\end{cor}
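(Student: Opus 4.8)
The plan is to reuse the machinery of Proposition~\ref{prop:present} and Corollary~\ref{cor:presen_2g+2}, specializing to the regime $g+1<n<2g$. By the analysis in Case~1 of Proposition~\ref{prop:present} the only data sets with $n_1=2$ that fall in this range correspond to the signature $(0;2,2,2g,2g)$, while in Case~2 with $n_1\neq 2$ we obtain $\clmap_p(S_{0,4})=\pmap(S_{0,4})$; in both subcases the relevant fact is that $\clmap_p(S_{0,4})$ sits in an exact sequence over $\pmap(S_{0,4})$ with quotient trivial or $\Z_2$ and, crucially, the generator of $\Z_n^\times(\Gamma_F)$ is realized (via Remark~\ref{rem:main}) by a mapping class whose lift we can write down explicitly. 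First I would invoke Theorem~\ref{thm:norm_cent}: there is an exact sequence $1\to\langle F\rangle\to C(F)\to\clmap_p(S_{0,4})\to 1$ coming from the restriction of~(\ref{eqn:bhseq}), so it suffices to combine the presentation of $\clmap_p(S_{0,4})$ from Proposition~\ref{prop:present} with Lemma~\ref{lem:presen}.

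Next I would carry out the extension computation. In the subcase $n_1\neq 2$ we have $\clmap_p(S_{0,4})=\pmap(S_{0,4})=\langle a_{12},a_{13}\rangle$, a free group of rank two, so $C(F)$ is generated by $\{F, G_1, G_2\}$ where $G_1$, $G_2$ are lifts of $a_{12}$, $a_{13}$ respectively (chosen as pseudo-periodic mapping classes using the realization of \cite{rajeevsarathy19}, exactly as in the proof of Corollary~\ref{cor:presen_2g+2}). The kernel relation is $F^n=1$; the conjugacy relations are $G_iFG_i^{-1}=F^{j_i}$, and since $a_{12},a_{13}\in\clmap_p$ act with $\aut(\langle F\rangle)$ trivial (Remark~\ref{rem:iso_norm}), we get $[G_1,F]=[G_2,F]=1$; and since $\pmap(S_{0,4})$ is free on $a_{12},a_{13}$ there are no lifted relations beyond those forced by the kernel, i.e. any relation among $G_1,G_2$ lies in $\langle F\rangle$. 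One must then check that, for this range of $n$, no nontrivial such relation is forced — this is where the precise arithmetic of the data set enters, and it should follow by the same symplectic/faithfulness argument as in Corollary~\ref{cor:presen_2g+2}, or by a direct Euler-characteristic count showing the lifted commutator is already trivial. In the subcase $n_1=2$ the quotient $\psi(\clmap_p(S_{0,4}))$ is $\langle\psi(\sigma_1)\rangle\cong\Z_2$ rather than trivial, so a priori $\clmap_p(S_{0,4})=\langle\sigma_1,a_{13}\mid(\sigma_1a_{13})^2=1\rangle$ and one would expect an extra relation; I would check that when $g+1<n<2g$ forces the signature $(0;2,2,2g,2g)$, the lift $G_2$ of $\sigma_1$ can be re-chosen so that the relation $(\sigma_1 a_{13})^2=1$ lifts to $(G_1G_2)^2=F^{j}$ with $j\equiv 0$, collapsing the presentation to the stated form; alternatively, absorb the relation-ambiguity into the choice of generators so that the final answer is uniform.

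The main obstacle I anticipate is the determination of the exponents $j_i$ in the lifted and conjugacy relations — i.e. pinning down exactly which power of $F$ each lifted relator equals. For $C(F)$ the conjugacy relations are automatically trivial by Remark~\ref{rem:iso_norm} (the centralizer consists of lifts acting trivially on $\langle F\rangle$), which is a genuine simplification over the $N(F)$ computation; but the lifted relations among the $G_i$ still require either an explicit symplectic representation computation (feasible only for small $g$, as in the proof of Corollary~\ref{cor:presen_2g+2}) or a curve-level argument showing the relevant products of Dehn twists along the reduction curves are isotopic after multiplication by a rotation. I would argue uniformly in $g$ by exhibiting the generators $G_1,G_2$ as concrete products of Dehn twists on the two-hexagon model (as in Figure~\ref{fig:hexagons}) and verifying the relation $F^n=[G_1,F]=[G_2,F]=1$ directly, together with the assertion that $\pmap(S_{0,4})$ being free forces no further relations; the range condition $g+1<n<2g$ is used precisely to guarantee $n_1\neq n_2$ (hence $\clmap_p(S_{0,4})=\pmap(S_{0,4})$ in the generic case) and to exclude the exceptional orders $2g$ and $2g+2$ already treated in Corollary~\ref{cor:presen_2g+2}. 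Finally I would note that the $G_i$ are pseudo-periodic because they are lifts of mapping classes in $\pmap(S_{0,4})$ (which are products of Dehn twists about disjoint curves) conjugated appropriately, matching the phrasing of the statement.
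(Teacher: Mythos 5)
Your core argument is exactly the paper's: the range $g+1<n<2g$ puts you in Case~2 of Proposition~\ref{prop:present}, so $\clmap_p(S_{0,4})=\pmap(S_{0,4})$ is free of rank $2$, and Lemma~\ref{lem:presen} applied to $1\to\langle F\rangle\to C(F)\to\clmap_p(S_{0,4})\to 1$ gives the kernel relation $F^n=1$ and the (automatically trivial, since we are in the centralizer) conjugacy relations, with no lifted relations because the quotient is free. Two of your worries are superfluous, though. First, the $n_1=2$ subcase cannot occur here: as shown in Case~1 of Proposition~\ref{prop:present}, $n_1=2$ forces $n=2g$ or $n=2g+2$, both excluded by the strict inequalities $g+1<n<2g$, so your claim that the signature $(0;2,2,2g,2g)$ ``falls in this range'' is incorrect and the entire discussion of re-choosing lifts there is moot. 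Second, there is nothing further to ``check'' about hidden relations among $G_1,G_2$: Lemma~\ref{lem:presen} asserts that $\langle S_N\cup S_Q\mid R_N\cup R_Q\cup R_{NQ}\rangle$ is a complete presentation of the extension, and $R_Q=\emptyset$ precisely because $\pmap(S_{0,4})$ is free, so no symplectic or curve-level computation is needed.
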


\begin{proof}
The assertion follows directly from Lemma \ref{lem:presen} and Proposition \ref{prop:present} by observing that $\clmap_p(S_{0,4})$ is a free group of rank $2$.
\end{proof}

\subsection{Recovering generating sets for the liftable mapping class groups of the hyperelliptic cover and the balanced superelliptic cover}
\label{subsec:lift_bh}
In this subsection, we recover the generating sets of the liftable mapping class groups associated with the hyperelliptic cover first derived by Birman-Hilden \cite{birman71}, and for the balanced superelliptic cover obtained by Ghaswala-Winarski \cite{ghaswala17}.

An $F \in \map(S_g)$ is said to be a \textit{hyperelliptic involution} if it has a representative $\F \in \homeo(S_g)$ of order $2$ which has $2g+2$ fixed points. The data set and generating $\Gamma$-vector corresponding to a hyperelliptic involution is given by
\[
D_F = (2,0;\underbrace{(1,2), (1,2), \dots , (1,2)}_{(2g+2)\text{ - times}})
\]
and
\begin{equation}
\label{eqn:gen_vec_hi}
\Gamma_F = (c_1 = 1, c_2 = 1, \dots , c_{2g+2} = 1),
\end{equation}
respectively.
\begin{prop}[Birman-Hilden {\cite[Theorem 8]{birman71}}]
\label{prop:b-h}
For $g\geq 2$, let $F \in \map(S_g)$ be a hyperelliptic involution and let $p:S_g \to S_{0,2g+2}$ be its associated cover. Then
\[
\lmap_p(S_{0,2g+2})=\langle \sigma_i\mid 1\leq i<2g+2 \rangle=\map(S_{0,2g+2}).
\]
\end{prop}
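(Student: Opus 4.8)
The plan is to apply Theorem~\ref{thm:main} and Corollary~\ref{cor:main} to the explicit generating $\Gamma$-vector $\Gamma_F = (1,1,\dots,1)$ of Equation~(\ref{eqn:gen_vec_hi}) and show that the resulting generating set exhausts all of $\map(S_{0,2g+2})$. First I would examine the set $B$ of half-twists: since every pair $(d_i,n_i)=(1,2)$ in $D_F$ satisfies $n_i=n_j$ and $d_i=d_j$, the half-twist $\tau_{i,i+1}=\sigma_i$ lies in $B$ for every $1\le i<2g+2$. Hence $B$ already contains the full Artin generating set $\{\sigma_i\mid 1\le i<2g+2\}$ of $\map(S_{0,2g+2})$ (by the presentation in Lemma~\ref{lem:genset_mod0}(a)).

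Next I would observe that, at the level of the permutation image, $\psi(B)$ contains all adjacent transpositions $(i,i+1)$, so $\psi(B)=\Sigma_{2g+2}$; in particular $H_2=\Sigma_{2g+2}$ and, since $H_2\subseteq H_1\subseteq \Sigma_{2g+2}$, also $H_1=\Sigma_{2g+2}$. By Theorem~\ref{thm:main}, $\clmap_p(S_{0,2g+2})=\langle A\cup B\rangle$, and since $A$ generates $\pmap(S_{0,2g+2})$ while $B\supseteq\{\sigma_i\}$, we get $\langle A\cup B\rangle\supseteq\langle\sigma_i\mid 1\le i<2g+2\rangle=\map(S_{0,2g+2})$. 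As $\clmap_p(S_{0,2g+2})$ is a subgroup of $\map(S_{0,2g+2})$, equality holds; since $n=2$ forces $\Z_n^\times(\Gamma_F)$ to be trivial, the set $C$ of Corollary~\ref{cor:main} may be taken empty, and $\lmap_p(S_{0,2g+2})=\clmap_p(S_{0,2g+2})=\map(S_{0,2g+2})$.

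The argument is essentially formal once the data set is in hand: the only thing to check carefully is that each $\sigma_i$ genuinely belongs to $B$, i.e.\ that the corresponding half-twist fixes the generating $\Gamma$-vector up to the trivial automorphism of $\langle F\rangle$. This is immediate because all entries of $\Gamma_F$ are equal to $1$, so any permutation of the branch points fixes $\Gamma_F$ on the nose; consequently $(1,\sigma_i)\cdot\Gamma_F=\Gamma_F$ and $\sigma_i\in\clmap_p(S_{0,2g+2})$. I do not anticipate a real obstacle here — the content lies entirely in recognizing that the uniformity of $\Gamma_F$ makes $B$ as large as possible, which then forces the liftable mapping class group to be everything. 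One could alternatively phrase this as: every mapping class of $S_{0,2g+2}$ lifts because the hyperelliptic cover is the universal such example (cf.\ the Ghaswala--Winarski classification), but the generating-set argument above is self-contained given Theorem~\ref{thm:main}.
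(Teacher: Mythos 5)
Your proposal is correct and follows essentially the same route as the paper: identify that all entries of $\Gamma_F$ are equal so that $B$ contains every half-twist (in particular all $\sigma_i$), conclude $H_1=H_2=\Sigma_{2g+2}$, and note that $n=2$ makes the distinction between $\lmap_p$ and $\clmap_p$ (equivalently between $N(F)$ and $C(F)$) vanish. The only cosmetic difference is that the paper phrases the last point as $|F|=2\Rightarrow N(F)=C(F)$ rather than via triviality of $\Z_2^{\times}(\Gamma_F)$, which amounts to the same thing.
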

\begin{proof}
Since $|F| = 2$, we have $N(F) = C(F)$. Therefore, $\lmap_p(S_{0,2g+2}) = \clmap_p(S_{0,2g+2})$. By Theorem \ref{thm:main} and Equation (\ref{eqn:gen_vec_hi}), it follows that
\[
H_1=H_2=\langle \psi(\tau_{i,j})=(i,j)\mid 1\leq i<j \leq 2g+2\rangle.
\]
Since
\[
\langle \psi(\tau_{i,j})=(i,j)\mid 1 \leq i<j\leq 2g+2\rangle=\langle \psi(\sigma_i)=(i,i+1)\mid 1\leq i<2g+2\rangle=\Sigma_{2g+2},
\]
we have $H_1=\Sigma_{2g+2}$. The result now follows from the observation that
\[
\langle a_{i,j}\mid 1\leq i<j<2g+2\rangle \subset \langle \sigma_i \mid 1\leq i<2g+2\rangle.
\]
\end{proof}

A periodic mapping class $F\in \map(S_g)$ with data set
\[
D_F = (n,0;\underbrace{(1,n), (-1,n), \dots , (1,n),(-1,n)}_{(k+1)\text{ - pairs}})
\]
is called a \textit{balanced superelliptic map}, where $k=g/(n-1)$. The generating $\Gamma$-vector corresponding to $D_F$ is given by
\begin{equation}
\label{eqn:gen_vec_bsc}
\Gamma_F = (c_1 = 1, c_2 = -1, \dots , c_{2k+1} = 1, c_{2k+2}=-1).
\end{equation}

\begin{prop}[Ghaswala-Winarski {\cite[Lemma 5.1]{ghaswala17}}]
\label{prop:g-w}
For $g\geq 2$, let $F\in \map(S_g)$ be a balanced superelliptic mapping class and let $p:S_g \to S_{0,2k+2}$ be its associated cover. Then
\[
\lmap_p(S_{0,2k+2})=\langle a_{ij},\sigma_{i'+1}\sigma_{i'}\sigma_{i'+1}^{-1},\sigma_1\sigma_3\cdots \sigma_{2k+1} \mid 1\leq i<j< 2k+2,~1\leq i'\leq 2k\rangle.
\]
\end{prop}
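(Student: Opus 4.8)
The plan is to apply Corollary~\ref{cor:main} directly to the balanced superelliptic data set and then massage the resulting generating set into the stated form. First I would observe that, since $|F| = n$ may now be larger than $2$, I must work with $\lmap_p(S_{0,2k+2})$ rather than with $\clmap_p(S_{0,2k+2})$, so I need the set $C$ from Remark~\ref{rem:main} in addition to $A$ and $B$. The generating $\Gamma$-vector is $\Gamma_F = (1,-1,1,-1,\dots,1,-1)$ with all $n_i = n$. Since the only coordinates that are equal are those of the same sign, the set $B$ of half-twists consists precisely of the $\tau_{i,j}$ with $i,j$ both odd or both even; equivalently $\psi(B)$ generates the subgroup of $\Sigma_{2k+2}$ permuting odd indices among themselves and even indices among themselves. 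The relevant automorphism of $\Z_n$ taking $1 \mapsto -1$ is $\ell = -1$, which has order $2$, and it interchanges the value $1$ with the value $-1$; I would check that $-1 \in \Z_n^\times(\Gamma_F)$ by exhibiting the permutation $\delta$ with $\psi(\delta) = (1,2)(3,4)\cdots(2k+1,2k+2)$, so that $(-1,\delta)\cdot\Gamma_F = \Gamma_F$. This gives one element $\delta_{-1}$ for the set $C$, and by Remark~\ref{rem:main} one may realize it as $\sigma_1\sigma_3\cdots\sigma_{2k+1}$ since those disjoint transpositions pair up consecutive coordinates of opposite sign.

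Next I would identify $H_1 = \psi(\lmap_p(S_{0,2k+2}))$ explicitly: it is the subgroup of $\Sigma_{2k+2}$ generated by all transpositions of like-parity indices together with the fixed-point-free involution $(1,2)(3,4)\cdots(2k+1,2k+2)$. This is exactly the signed-permutation/wreath-type subgroup $\Sigma_2 \wr \Sigma_{k+1}$ acting on the $k+1$ pairs $\{1,2\},\{3,4\},\dots$, since swapping within a pair is the parity-reversing involution restricted to that pair and permuting pairs is generated by the like-parity transpositions. Then, using the short exact sequence~(\ref{eqn:lmod}) $1 \to \pmap(S_{0,2k+2}) \to \lmap_p(S_{0,2k+2}) \to H_1 \to 1$, I get $\lmap_p(S_{0,2k+2}) = \langle A \cup B \cup C\rangle$ where $A = \{a_{ij}\}$ generates $\pmap$, $\psi(B)$ generates the like-parity transpositions, and $C = \{\sigma_1\sigma_3\cdots\sigma_{2k+1}\}$. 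The final bookkeeping step is to replace $B$: a half-twist $\tau_{i,j}$ interchanging two like-parity branch points can be written, modulo $\pmap(S_{0,2k+2})$, in terms of the elementary half-twists $\sigma_{i'}$; in particular the generators $\sigma_{i'+1}\sigma_{i'}\sigma_{i'+1}^{-1}$ for $1 \le i' \le 2k$ are half-twists interchanging branch points two apart (hence of like parity), and together with the $a_{ij}$ they generate the same subgroup as $A \cup B$. Checking that $\psi$ of these elements, together with $\sigma_1\sigma_3\cdots\sigma_{2k+1}$, generates all of $H_1 = \Sigma_2 \wr \Sigma_{k+1}$ is a direct computation in the symmetric group.

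The main obstacle I anticipate is the last reduction — showing that the \emph{specific} generators $\sigma_{i'+1}\sigma_{i'}\sigma_{i'+1}^{-1}$ ($1 \le i' \le 2k$), rather than the full collection of like-parity half-twists in $B$, suffice together with the $a_{ij}$ and $\sigma_1\sigma_3\cdots\sigma_{2k+1}$. This amounts to verifying at the level of $\Sigma_{2k+2}$ that $\psi(\sigma_{i'+1}\sigma_{i'}\sigma_{i'+1}^{-1}) = (i', i'+2)$, so that these are the ``distance-two'' transpositions, and then that the distance-two transpositions generate the alternating-type permutations of like parity while the extra involution $(1,2)(3,4)\cdots$ supplies the parity swaps — so that the group generated is exactly $H_1$. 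One must be a little careful that conjugation by $\pmap(S_{0,2k+2})$ does not change the image under $\psi$, which is immediate since $\pmap \subset \ker\psi$, so any like-parity half-twist in $B$ differs from a product of the $\sigma_{i'+1}\sigma_{i'}\sigma_{i'+1}^{-1}$ by an element of $\pmap(S_{0,2k+2}) = \langle a_{ij}\rangle$; this is the place where Corollary~\ref{cor:main} does the real work and the rest is elementary symmetric-group combinatorics.
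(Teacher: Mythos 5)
Your proposal follows essentially the same route as the paper: apply Theorem~\ref{thm:main} and Corollary~\ref{cor:main}, identify $B$ with the like-parity half-twists, produce the single element $\delta$ with $\psi(\delta)=(1,2)(3,4)\cdots(2k+1,2k+2)$ realizing $\ell=-1$, and then trade $B$ for the distance-two half-twists $\sigma_{i+1}\sigma_i\sigma_{i+1}^{-1}$ using $\psi(\sigma_{i+1}\sigma_i\sigma_{i+1}^{-1})=(i,i+2)$ and the fact that $\pmap(S_{0,2k+2})\subset\ker\psi$. Two small corrections. First, your identification $H_1\cong\Sigma_2\wr\Sigma_{k+1}$ is wrong for $k\geq 2$: the like-parity transpositions do not preserve the partition into pairs $\{1,2\},\{3,4\},\dots$; they generate $\Sigma_{\{1,3,\dots,2k+1\}}\times\Sigma_{\{2,4,\dots,2k+2\}}\cong\Sigma_{k+1}\times\Sigma_{k+1}$, and adjoining the parity-reversing involution gives $(\Sigma_{k+1}\times\Sigma_{k+1})\rtimes\Z_2$, of order $2((k+1)!)^2$, not $2^{k+1}(k+1)!$. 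This does not damage the argument, because the abstract isomorphism type of $H_1$ is never needed: Remark~\ref{rem:main} already tells you $\psi(B\cup C)$ generates $H_1$, and your job is only to show the new generators have the same $\psi$-images, which they do. Second, you verify $-1\in\Z_n^{\times}(\Gamma_F)$ but not the reverse containment; since every entry of $\Gamma_F$ is $\pm1$ and $(\ell,\delta)$ multiplies all entries by $\ell$, stabilization forces $\ell=\pm1$, so $C=\{\delta\}$ indeed suffices. With these one-line repairs your argument coincides with the paper's proof.
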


\begin{proof}
By Theorem \ref{thm:main} and Equation (\ref{eqn:gen_vec_bsc}), it follows that
\[
H_2=\langle \psi(\tau_{ij})=(i,j)\mid 1\leq i<j \leq 2k+2, ~i\equiv j \pmod 2\rangle=\langle (i,i+2)\mid 1\leq i \leq 2k \rangle.
\]
For $\delta \in \lmap_p(S_{0,2k+2})$ and $\ell\in \Z_n^{\times}$, we have $(\ell,\delta)\cdot \Gamma_F=\Gamma_F$. Since
\[
(\ell,\delta)\cdot \Gamma_F=\delta\cdot(\ell, -\ell, \dots \ell, -\ell),
\]
by Equation (\ref{eqn:gen_vec_bsc}), we have $\ell=\pm 1$. By Equation (\ref{eqn:gen_vec_bsc}), it follows that there exist an order $2$ rotation $\delta\in \map(S_{0,2k+2})$ such that $\psi(\delta)=(1,2)(3,4)\cdots (2k+1,2k+2)$ and $(-1,\delta)\cdot \Gamma_F=\Gamma_F$. Therefore $\delta\in \lmap_p(S_{0, 2k+2})$ and $\langle q(\delta)\rangle\cong H_1/H_2\cong \Z_n^{\times}(\Gamma_F)$. By Remark \ref{rem:main}, we have
\[
H_1=\langle \psi(\tau_{i,i+2}),\psi(\delta)\mid1\leq i\leq 2k\rangle.
\]
For $1\leq i\leq 2k$, we observe that $\psi(\sigma_{i+1}\sigma_i\sigma_{i+1}^{-1})=(i,i+2)$ and $\psi(\sigma_1\sigma_3\cdots \sigma_{2k+1})=\psi(\delta)$. The assertion now follows by Corollary \ref{cor:main}.
\end{proof}

As an immediate consequence of Theorem \ref{thm:norm_cent}, we provide an alternative proof of the following result by Ghaswala-Winarski \cite{winarski17} about characterizing spherical covers for which $\lmap_p(S_{0,k})=\map(S_{0,k})$.
\begin{prop}[Ghaswala-Winsarski{\cite[Theorem 1.3]{winarski17}}]
For $g\geq 2$, let $F \in \map(S_g)$ be a spherical mapping class with the generating $\Gamma$-vector $\Gamma_F=(c_1,c_2,\dots,c_k)$. 
Then $\lmap_p(S_{0,k})=\map(S_{0,k})$ if and only if $c_1=c_2=\dots =c_k$ and $k\equiv 0 \pmod n$.
\end{prop}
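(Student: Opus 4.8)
The plan is to reduce the statement to a purely combinatorial fact about the image $H_1\leq\Sigma_k$ of $\lmap_p(S_{0,k})$ under $\psi$. Since $\pmap(S_{0,k})\subseteq\lmap_p(S_{0,k})$ and $\psi$ restricts to the sequence (\ref{eqn:lmod}), a subgroup of $\map(S_{0,k})$ containing $\pmap(S_{0,k})$ equals $\map(S_{0,k})$ precisely when its image in $\Sigma_k$ is all of $\Sigma_k$; hence $\lmap_p(S_{0,k})=\map(S_{0,k})$ if and only if $H_1=\Sigma_k$. I would first record that $k\geq 3$: since $\Orb_F$ is a hyperbolic $2$-orbifold with underlying surface $S^2$ (recall $g\geq 2$ and $F$ spherical), the Riemann--Hurwitz formula (\ref{eqn:rh}) forces at least three cone points. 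Next, using Remark \ref{rem:iso_norm} (a consequence of Theorem \ref{thm:norm_cent}) together with the description of the $\Z_n^{\times}\times\aut^+(\Gamma)$-action on generating $\Gamma$-vectors (arbitrary order-preserving permutation of the $c_j$'s, combined with multiplication by a unit), I would translate the membership $(i,j)\in H_1$ into the concrete statement: there is $\ell_{ij}\in\Z_n^{\times}$ with $\ell_{ij}c_i\equiv c_j$, $\ell_{ij}c_j\equiv c_i$, and $\ell_{ij}c_m\equiv c_m\pmod n$ for all $m\notin\{i,j\}$. This is exactly the type of computation already performed in Remark \ref{rem:main} for cycles.

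For the ``if'' direction, assume $c_1=\cdots=c_k=:c$ (the condition $n\mid k$ is then automatic, being forced by (\ref{eqn:long}) since $\gcd(c,n)=1$). Condition (\ref{eqn:gen}) gives $\langle c\rangle=\Z_n$, so $c\in\Z_n^{\times}$ and hence $n_i=|c_i|=|c|=n$ for every $i$. Then for each $i$ the half-twist $\sigma_i$ exchanging branch points $i$ and $i+1$ satisfies $n_i=n_{i+1}$ and $d_i=d_{i+1}$, so $\sigma_i\in B\subseteq\clmap_p(S_{0,k})\subseteq\lmap_p(S_{0,k})$; since the $\sigma_i$ generate $\map(S_{0,k})$ by Lemma \ref{lem:genset_mod0}(a), we conclude $\lmap_p(S_{0,k})=\map(S_{0,k})$, just as in the proof of Proposition \ref{prop:b-h}.

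For the ``only if'' direction, assume $H_1=\Sigma_k$, so every transposition lies in $H_1$. From $c_j=\ell_{ij}c_i$ with $\ell_{ij}$ a unit we get $\gcd(c_i,n)=\gcd(c_j,n)$ for all $i,j$; call this common value $d$. Each $c_i$ then generates the same subgroup $d\Z_n$ of $\Z_n$, so $\langle c_1,\dots,c_k\rangle=d\Z_n$, and (\ref{eqn:gen}) forces $d=1$, i.e. every $c_i$ is a unit. Now fix $m\in\{2,\dots,k\}$ and choose $t\notin\{1,m\}$ (possible since $k\geq 3$); the relation coming from $(1,m)\in H_1$ fixes $c_t$ up to the unit $\ell_{1m}$, and cancelling the unit $c_t$ gives $\ell_{1m}=1$, whence $c_m=c_1$. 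Thus $c_1=\cdots=c_k$, and $n\mid k$ follows from $kc_1=\sum_i c_i\equiv 0\pmod n$ with $\gcd(c_1,n)=1$.

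I expect the only genuine subtlety to be the bookkeeping in the translation step: correctly passing from $\lmap_p(S_{0,k})$ to $\pi_2(\mathrm{Stab}(\Gamma_F))$ via Theorem \ref{thm:norm_cent} and Remark \ref{rem:iso_norm}, and matching $\psi(\delta)$ with the permutation of the entries $c_j$ induced by the associated automorphism of $\Gamma$ (Theorem \ref{thm:aut_gamma}), so that the equations $\ell_{ij}c_{\bullet}\equiv c_{\bullet}$ appear with the correct indices. Once this dictionary is fixed, everything that remains is short $\gcd$/order arithmetic in $\Z_n$, with no hard step.
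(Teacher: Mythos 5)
Your proposal is correct and follows essentially the same route as the paper: reduce to the statement that every transposition is induced by an element of $\lmap_p(S_{0,k})$ stabilizing $\Gamma_F$ up to a unit $\ell$, then do elementary arithmetic in $\Z_n$. If anything you are more careful than the paper's two-line argument, which silently takes $\ell=1$; your observation that all $c_i$ must be units (so that $\ell_{ij}c_t=c_t$ for some $t\notin\{i,j\}$ forces $\ell_{ij}=1$) supplies exactly the justification that step needs.
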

\begin{proof}
We have $\lmap_p(S_{0,k})=\map(S_{0,k})$ if and only if $\sigma_i\in \lmap_p(S_{0,k})$ for every $1\leq i<k$. By Theorem \ref{thm:norm_cent}, $\sigma_i\in \lmap_p(S_{0,k})$ if and only if $(1,\sigma_i)\cdot \Gamma_F=\Gamma_F$. It follows that $(1,\sigma_i)\cdot \Gamma_F=\Gamma_F$ if and only if $c_1=c_2=\dots=c_k$ and $k\equiv 0 \pmod n$. The conclusion $k\equiv 0 \pmod n$ follows by Equation \ref{defn:dataset}(\ref{eqn:angle_sum}).
\end{proof}

\section*{Acknowledgment} The first author was supported by the Prime Minister Research Fellowship (PMRF) Scheme instituted by the Ministry of Education, India and the third author acknowledges financial support from the Swarna Jayanti Fellowship grant of Dr. Mahender Singh with grant number SB/SJF/2019-20/04.

\bibliographystyle{abbrv}
\bibliography{Liftable}
\end{document}